\date{}
\pgfplotsset{compat=1.14}
\newtheorem{theorem}{Theorem}[section]
\newtheorem{lemma}[theorem]{Lemma}
\newtheorem{proposition}[theorem]{Proposition}
\newtheorem*{thm:main}{Main Theorem}
\theoremstyle{definition}
\newtheorem{definition}[theorem]{Definition}
\theoremstyle{remark}
\newtheorem{remark}{Remark}
\newcommand{\NN}{\ensuremath{\mathbb{N}}}
\newcommand{\RR}{\ensuremath{\mathbb{R}}}
\newcommand{\lambdaext}{\ensuremath{\lambda_{\mathrm{ext}}}}
\newcommand{\lambdaprinc}{\ensuremath{\lambda_{\mathrm{princ}}}}
\newcommand{\norm}[1]{\ensuremath{\lVert#1\rVert}}
\DeclareMathOperator{\card}{card}
\DeclareMathOperator{\conv}{conv}
\DeclareMathOperator{\dist}{dist}
\DeclareMathOperator{\Inte}{Int}
\DeclareMathOperator{\spanned}{span}
\begin{document}

\title{The $C^1$ property of convex carrying simplices for competitive maps}

\author{
Janusz Mierczy\'nski \\
Faculty of Pure and Applied Mathematics \\
Wroc{\l}aw University of Science and Technology \\
Wybrze\.ze Wyspia\'nskiego 27 \\
PL-50-370 Wroc{\l}aw \\
Poland}

\maketitle

\begin{abstract}
For a class of competitive maps there is an invariant one-codimensional manifold (the carrying simplex) attracting all non-trivial orbits.  In the present paper it is shown that its convexity implies that it is a $C^1$ submanifold-with-corners, neatly embedded in the non-negative orthant.  The proof uses the characterization of neat embedding in terms of inequalities between Lyapunov exponents for ergodic invariant measures supported on the boundary of the carrying simplex.
\end{abstract}

\section{Introduction}
\label{sect:intro}
In his paper \cite{H-1988} M. W. Hirsch proved, for a wide class of totally competitive systems of ordinary differential equations (ODEs), the existence of an unordered (with respect to the coordinate-wise ordering) invariant set, homeomorphic to the standard probability simplex via radial projection, such that every non-trivial (that is, not equal identically to zero) orbit is attracted towards it.  Some gaps in Hirsch's proof in~\cite{H-1988}, noticed by M. L. Zeeman, were filled in later proofs of discrete-time analogues, see below.

M. L. Zeeman in~\cite{Z-1993} introduced the name `carrying simplex'.  It would be good to give some explanation of the meaning of the name.  Namely, totally competitive systems model interactions between species where both interspecific and intraspecific competition are present.  The simplest ODE of that kind, the logistic equation $\dot{x} = r x (1 - \frac{x}{K})$, where $r, K > 0$, has the property that its positive solutions tend, as $t \to \infty$, to $K$.  The biological interpretation is that the population density eventually stabilizes at $K$ (which is called the {\em carrying capacity\/}).  In the multidimensional case, as each non-trivial solution tends to that simplex, the eventual behaviour is given by the restriction of the system on the simplex.  We mention here that, as shown by Smale~\cite{Sma}, every smooth vector field on the standard $n$-dimensional probability simplex can be embedded in a totally competitive system of dimension $n + 1$.

Hirsch's theory of carrying simplices has been extended to some discrete-time competitive systems.  To the present author's knowledge, the first paper to deal with those issues is H. L. Smith~\cite{Sm}.  There, a class of competitive maps was introduced which contained Poincar\'e maps of time-periodic totally competitive systems of ODEs, and for that class the existence of a carrying simplex was proved.  Some problems left open in~\cite{Sm} were resolved in Wang and Jiang~\cite{W-J-2002}.  In~\cite{H-2008} Hirsch announced a theorem on the existence of a carrying simplex for some discrete-time competitive systems (not necessarily invertible).  In~\cite[Appendix]{RH} Ruiz-Herrera proved the existence of the carrying simplex for maps that are not necessarily bijective.  A similar result was obtained by Jiang, Niu and Wang in~\cite[Appendix]{J-N-W}.  For a different, more `dynamical' in spirit, proof, see Baigent~\cite{B-JDDE} (analogues for Lotka--Volterra systems of ODEs were given in \cite{B-Edinburgh}, \cite{B}).

Among other results, Hirsch showed in~\cite{H-1988} that the carrying simplex is a Lipschitz submanifold.  He asked there whether the simplex is smooth.

The solution of the problem of the smoothness of the carrying simplex inside the non-negative orthant has been eluding, and continues to elude, researchers so far.  Indeed, to the present author's best knowledge, no counterexamples are known.

A consequence of the above is that, although we have reduction to a system of smaller dimension, that reduction is at most Lipschitz.  Apparently, this should have no relevance when one uses purely topological tools such as CW decompositions (see~\cite{B-E-L}) or degree (\cite{J-N}, and recently, \cite{N-RH}), but even then the (possible) absence of smoothness can be a serious problem.

\smallskip
The picture changes when one takes into account the boundary of the carrying simplex.  Namely, the most general result is that the carrying simplex is a $C^1$ submanifold-with-corners neatly embedded in the non-negative orthant if and only if for any ergodic invariant measure supported on the boundary of the simplex the external Lyapunov exponents are larger than the principal Lyapunov exponent \cite{J-M-W} (for an earlier continuous-time version, with the 'if' implication only, see~\cite{M-1994}).

In~particular, a sufficient condition for the satisfaction of the above property of invariant measures is that for each face of the carrying simplex  the boundary of the face be weakly repelling (that property is called, in the context of mathematical ecology, \emph{weak persistence}). See~\cite{M-1994}, and for the strong repelling (\emph{permanence}), \cite{M-Sch}.

\medskip
Convexity of the carrying simplex (or, more precisely, convexity of the global attractor) is a feature that has been intensively investigated recently (see, for~example, \cite{B, B-JDDE}).  As, at~least for discrete-time competitive maps being analogues of Lotka--Volterra competitive systems of ODEs, there is strong correlation between the convexity of the carrying simplex and the global asymptotic stability of the (necessarily unique) interior fixed point (\cite{B}; cf.~also \cite{T, Z, Z-Z}), it is not unnatural, in view of the results mentioned in the last paragraphs, to suspect that the convexity of the carrying simplex should imply the appropriate inequalities on the Lyapunov exponents.  The present paper shows that this is indeed the case for a general class of competitive maps.

The result for three-dimensional systems has been proved in~\cite{M-2017}.

\smallskip
The paper is organized as follows.  In Section~\ref{sect:preliminaries} the notation is introduced, the necessary definitions and results are given. The Main Theorem is formulated in Section~\ref{sect:main}. Section~\ref{sect:main-proof} is devoted to the proof of the main result.

\section{Carrying simplices: Existence and basic properties}
\label{sect:preliminaries}

For a metric space $W$ let $\mathcal{B}(W)$ stand for the $\sigma$-algebra of Borel subsets of $W$.  $\NN$ denotes the set of positive integers.

\smallskip
We shall distinguish between points (elements of the {\em affine\/} space $H = \{\, x = (x_1, \dots, x_N): x_i \in \RR \,\}$) and vectors (elements of the {\em vector\/} space $V = \{\, v = (v_1, \dots, v_N): v_i \in \RR \,\}$).  $\norm{\cdot}$ stands for the Euclidean norm in $V$.
Denote by $C = \{\,x \in H: x_i \ge 0$ for all $i = 1, \dots, N \,\}$ the non-negative {\em orthant\/}.

\smallskip
The interior of $C$ is $C^{\circ} := \{\,x \in H : x \gg 0\,\}$ and the boundary of $C$ is $\partial C = C \setminus C^{\circ}$.

For $I \subset \{1,\dots,N\}$, let $C_I := \{\,x \in C_I : x_j = 0$ for $j \in \notin I \,\}$, $\dot{C}_I := \{\,x \in C_I : x_i > 0$ for $i \in I \,\}$ and $\partial C_I$ be the relative boundary of $C_I$, $\partial C_I = C_I \setminus \dot{C}_I$.  $C_I$ is called a {\em $k$-dimensional face\/} of $C$, where $k = \card{I}$.

For $x, y \in C_I$, we write $x \leq_I y$ if $x_i \leq y_i$ for all $i \in I$, and $x \ll_I y$ if $x_i < y_i$ for all $i \in I$.  If $x \leq_I y$ but $x \neq y$ we write $x <_I y$ (the subscript in $\leq$, $<$, $\ll$ is dropped if $I = \{1,\dots,N\}$).

\smallskip
The standard non-negative {\em cone\/} $K$, with nonempty interior $K^{\circ}$, in $V$ is the set of all $v$ in $V$ such that $v_i \ge 0$ for all $i \in \{1,\dots,N\}$.  For $I \subset \{1,\dots,N\}$, let $V_I := \{\,v \in V: v_j = 0$ for $j \notin I$ $\,\}$. The relations $\leq_I$, $<_I$, $\ll_I$ between vectors in $V_I$ are defined as the relations between points in $C_I$.  Let $K_I := K \cap V_I$, $\dot{K}_I := \{\,v \in K_I: v_i > 0$ for $i \in I\,\}$, $\partial K_{I} := K_I \setminus \dot{K}_I$.

\medskip
Let $P \colon C \to P(C)$ be a $C^k$ diffeomorphism onto its image $P(C) \subset H$.  Recall that this means that there is an open $U \subset H$, $C \subset U$, and a $C^k$ diffeomorphism $\tilde{P} \colon U \to \tilde{P}(C)$ such that the restriction $\tilde{P}|_{C}$ of $\tilde{P}$ to $C$ equals $P$.

\smallskip
A set $A \subset C$ is {\em invariant\/} if $P(A) = A$.  For $x \in C$ the {\em $\omega$-limit set $\omega(x)$\/} of $x$ is the set of those $y \in C$ for which there exists a subsequence $n_k \to \infty$ such that $\norm{P^{n_m}x - y} \to 0$ as $m \to \infty$.  A compact invariant $\Gamma \subset C$ is the {\em global attractor\/} for $P$ if for each bounded $B \subset C$ and each $\epsilon > 0$ there exists $n_0 \in \NN$ such that $P^{n}(B)$ is contained in the $\epsilon$-neighbourhood of $\Gamma$
for $n \ge n_0$.

\medskip
As in~\cite{J-M-W} we introduce the following assumptions.  We assume throughout the paper that they are satisfied.

\vskip 2mm
\noindent\textbf{(H1)} {\em $P$ is a $C^2$ diffeomorphism onto its
image $P(C)$.}

\vskip 2mm
\noindent \textbf{(H2)} {\em For each nonempty $I \subset \{1,\dots,N\}$, the sets $A = C_I, \dot{C}_I$ and $\partial C_I$ have the property that $P(A) \subset A$ and $P^{-1}(A) \subset A$.}

\vskip 2mm
\noindent \textbf{(H3$'$)} {\em For each nonempty $I \subset \{1,\dots,N\}$ and $x \in \dot{C}_I$, the $I \times I$  Jacobian matrix $D(P|_{C_I})(x)^{-1} = (DP(x)^{-1})_I = (DP^{-1}(Px))_I$ has all entries positive. Moreover, for any non-zero $v \in K_{\{1, \ldots, N\}} \setminus I$ there exists $j \in I$ such that $(DP(x)^{-1}v)_j > 0$.}

\vskip 2mm
\noindent \textbf{(H4$'$)} {\em For each $i \in \{1,\dots,N\}$, $P|_{C_{\{i\}}}$ has a unique fixed point $u_i > 0$ with $0 < (d/dx_{i})(P|_{C_{\{i\}}})(u_i) < 1$.  Moreover, $\dfrac{\partial P_i}{\partial x_j}(u_i) < 0$ $(j \ne i)$.}

\vskip 2mm
\noindent \textbf{(H5)} \textit{If $x \in \dot{C}_I$ is a nontrivial $p$-periodic point of $P$, then $\mu_{I,p}(x) < 1$, where $\mu_{I,p}(x)$ is the \textup{(}necessarily real\textup{)} eigenvalue of the mapping $D{(P|_{C _I})}^p(x)$ with the smallest modulus.}

\vskip 2mm
\noindent \textbf{(H6)}  {\em For each nonempty $I \subset \{1,\dots,N\}$ and $x, y \in \dot{C}_I$, if $0 \ll_{I} Px \ll_{I} Py$, then $\dfrac{P_{i}x}{P_{i}y} \ge \dfrac{x_i}{y_i}$ for all $i \in
I$ \textup{(}where $P = (P_1, \dots, P_N)$\textup{)}.}

\vskip 2mm
Maps satisfying (H1), (H2) and some weaker form of (H3$'$) (without the last sentence) are called in~\cite{Sm} {\em competitive maps\/}.

\begin{remark}
  \label{rem-1}
  Observe that the restriction of the mapping satisfying (H1) to (H6) to any face $C_I$ of nonzero dimension satisfies all the above assumptions, modulo relabelling.
\end{remark}

\smallskip
The symbol $\triangle$ stands for the {\em standard probability
$(N - 1)$-simplex}, $\triangle : =\{\,x \in C: \sum_{i = 1}^N x_i = 1\,\}$.

\begin{theorem}
\label{thm0}
There exists a compact invariant $S \subset C$ \textup{(}the {\em carrying simplex} for $P$\textup{)} having the following properties:
\begin{enumerate}
\item[{\rm (i)}]
$S$ is homeomorphic to the standard probability simplex $\triangle$ via radial projection $R$.
\item[{\rm (ii)}]
No two points in $S$ are related by the $\ll$ relation.  Moreover, for each nonempty $I \subset \{1,\dots,N\}$ no two points in $S \cap \dot{C}_I$ are related by the $<_I$ relation.
\item[{\rm (iii)}]
For any $x \in C \setminus \{0\}$ one has $\omega(x) \subset S$.
\item[{\rm (iv)}]
The global attractor $\Gamma$ equals $\{\, {\alpha} x : \alpha \in [0, 1], \ x \in S \,\}$.
\end{enumerate}
\end{theorem}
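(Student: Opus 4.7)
My plan is to follow the existence proofs developed by Smith \cite{Sm}, Wang--Jiang \cite{W-J-2002}, and Ruiz-Herrera \cite{RH}, adapted to the present set of hypotheses. I would begin by producing a compact global attractor $\Gamma \subset C$. By (H4') each axial restriction $P|_{C_{\{i\}}}$ has a globally attracting fixed point $u_i$. Setting $\mathbf{u} := u_1 + \dots + u_N$, one uses (H6) to compare an interior orbit coordinatewise with the axial orbits, and combines this with (H2) to check that the box $[0, \mathbf{u}]$ is forward invariant and absorbs every bounded subset of $C$. Hence $P$ admits a compact global attractor $\Gamma \subset [0, \mathbf{u}]$, invariant under $P$ and containing each $u_i$.

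Define the candidate carrying simplex by
\[
S := \{\, x \in \Gamma \setminus \{0\} : \text{no } y \in \Gamma \text{ satisfies } x \ll y \,\}.
\]
Invariance of $S$ follows from invariance of $\Gamma$ together with the fact that (H3') makes $P$ strictly monotone with respect to $\ll$ (forward and backward). Compactness is immediate, since $S$ is closed in $\Gamma$. The $\ll$-unorderedness part of (ii) is built into the definition; its face-refined form requires more work. By Remark~\ref{rem-1}, $P|_{C_I}$ again satisfies (H1)--(H6), so it suffices to argue that no two distinct points of $S \cap \dot{C}_I$ are $<_I$-related. Suppose $x, y \in S \cap \dot{C}_I$ with $x <_I y$. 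Hypothesis (H6) implies that the ratios $r_i(n) := (P^n x)_i / (P^n y)_i$ are nondecreasing in $n$ and bounded above by $1$, hence converge to limits $c_i \in (0, 1]$. Passing to an $\omega$-limit subsequence inside the compact set $\Gamma \cap \dot{C}_I$ produces a ratio-preserving pair $(\bar x, \bar y)$ with $\bar x_i = c_i \bar y_i$; a further Birkhoff-type extraction yields a periodic pair at which the linearisation must have an eigenvalue of modulus $\geq 1$ in the interior direction (because ratios cannot strictly increase any further), contradicting (H5).

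For (i), the radial projection $R \colon S \to \triangle$ is continuous, and injective by (ii) since two distinct points on the same ray are $\ll$-comparable. For surjectivity, given $\xi \in \triangle$, set $t^{*}(\xi) := \sup\{\, t > 0 : t\xi \in \Gamma \,\}$, which lies in $(0, \infty)$ by the forward invariance of $\Gamma$ and its boundedness. Compactness of $\Gamma$ gives $t^{*}(\xi)\xi \in \Gamma$, and maximality combined with invariance places $t^{*}(\xi)\xi$ in $S$. Since $S$ is compact and $\triangle$ is Hausdorff, $R$ is a homeomorphism. Property (iv) follows from the identity $\Gamma = \{\, t\xi : \xi \in \triangle,\ 0 \le t \le t^{*}(\xi) \,\}$: the inclusion $\supset$ uses that rays through $\Gamma$ are intervals containing $0$ (from forward invariance), and $\subset$ uses (ii) to rule out any point of $\Gamma$ lying strictly above $S$ on its own ray. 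Property (iii) then follows because every $x \in C \setminus \{0\}$ has forward orbit eventually in $[0, \mathbf{u}]$, hence $\omega(x) \subset \Gamma$, and a monotone comparison via (H6) against an appropriately scaled axial orbit forbids $\omega$-limits strictly below $S$.

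The principal obstacle is clearly the face-refined unorderedness in (ii). The $\ll$-version is cheap, but upgrading to $<_I$ requires genuinely combining (H6) (ratio monotonicity) with (H5) (a spectral bound on the smallest-modulus eigenvalue), all transported into the face via Remark~\ref{rem-1}. This is the step where the positivity in (H3'), and not merely the weaker competitive structure, becomes indispensable, and where most of the technical effort is concentrated.
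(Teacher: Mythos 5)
The paper does not reprove this theorem: it simply quotes \cite{J-M-W} (Theorem~0 for (i)--(iii) and Proposition~2.4 for (iv)), so any self-contained construction is necessarily a different route. Your overall strategy (global attractor, then the unordered ``upper boundary'' as $S$, then radial projection) is indeed the one used in \cite{Sm}, \cite{W-J-2002} and \cite{J-M-W}, but the sketch has concrete gaps exactly where those papers do the real work. First, the claim that the order interval $[0,\mathbf{u}]$ is forward invariant is a \emph{cooperative}-map property: here (H3$'$) makes $P^{-1}$, not $P$, order-preserving, so $P$ can perfectly well map a point of $[0,\mathbf{u}]$ outside it, and neither (H2) nor (H6) gives the coordinatewise domination $P_i x \le P_i(x_i e_i)$ you are implicitly using (that would require sign conditions on $\partial P_i/\partial x_j$ globally, which are not assumed). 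The dissipativity/absorption argument in the cited literature instead exploits the ray-sublinearity $P(\lambda x) \le_I \lambda Px$ ($\lambda \ge 1$) extracted from (H6). Second, the same sublinearity is what makes $\Gamma$ star-shaped about the origin; ``rays through $\Gamma$ are intervals containing $0$ (from forward invariance)'' is circular — forward invariance of $\Gamma$ says nothing about radial convexity, and this is precisely the content of \cite[Prop.~2.4]{J-M-W}, i.e.\ of part (iv), which you cannot assume while proving surjectivity of $R$. Third, radial maximality of $t^{*}(\xi)\xi$ along its own ray does not place it in your $S$: a point can be the last point of $\Gamma$ on its ray and still be $\ll$-dominated by a point of $\Gamma$ on a \emph{different} ray, so an extra argument (again dynamical, via backward orbits and (H3$'$)) is needed to identify the radial boundary of $\Gamma$ with the unordered set.

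The step you yourself flag as the hardest, the $<_I$-unorderedness in (ii), also does not go through as written: (H6) does give monotone, convergent coordinate ratios along the orbit of a pair $x <_I y$, but ``a further Birkhoff-type extraction yields a periodic pair'' is not a valid move — $\omega$-limit sets of these maps need not contain periodic points, so (H5) cannot be invoked at a periodic orbit produced this way. The actual arguments (in \cite{W-J-2002} and \cite[Thm.~0]{J-M-W}) derive the contradiction from the limiting ratio configuration and the competitive structure directly, without manufacturing a periodic orbit. In short: the architecture of your proof is the right one, but the three load-bearing facts — dissipativity, radial convexity of $\Gamma$, and strict unorderedness on open faces — are each asserted rather than proved, and the justifications offered for the first two rely on monotonicity properties that competitive maps do not have.
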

\begin{proof}
Parts (i), (iii) and the first sentence in part (ii) correspond to parts of~\cite[Thm.~0]{J-M-W}.  The second sentence in part (ii) follows from an application of~\cite[Thm.~0(ii)]{J-M-W} to the carrying simplex of the restriction of the map $P$ to $C_I$.  Part (iv) is \cite[Prop.~2.4]{J-M-W}.
\end{proof}

We let $S_I := S \cap C_I$, $\dot{S}_I = S \cap \dot{C}_I$, $\partial S_I := S \cap \partial C_I$ and $S^{\circ} := S \cap C^{\circ}$.  A set $S_I$ is called a {\em $k$-dimensional face\/} of $S$, $k = \card{I}-1$.

\section{Statement of the main results}
\label{sect:main}

\bigskip
Before stating our main results we shortly explain what is meant by the neat embedding of the carrying simplex.  The differential-topological formulation of that concept is rather complicated, and the reader is referred to \cite{J-M-W} (in the present context).

For our purposes in the present paper the following suffices.  $S$ is a {\em $C^1$ submanifold-with-corners neatly embedded in $C$\/} when $R^{-1}|_{\triangle}$ is a $C^1$ mapping (recall that $R|_S$, the radial projection $R$ restricted to $S$, is a homeomorphism between $S$ and the standard probability simplex $\triangle$).  In other words, $S$ is a $C^1$ submanifold-with-corners neatly embedded in $C$ if~and only~if there exists a $C^1$ function $\rho \colon \triangle \to (0, \infty)$ such that
\begin{equation*}
(R|_S)^{-1}(y) = \rho(y) y, \quad y \in \triangle.
\end{equation*}
One can imagine neat embedding as `as little tangency as possible.'  In~general, the position of the carrying simplex can be quite complicated: a part of the face of the carrying simplex can be tangent to the corresponding face of the orthant, whereas at another part we have transversality.  See~\cite{M-1999a} and the figure below.
\begin{figure}[H]
  \centering
  \includegraphics[width=0.8\textwidth]{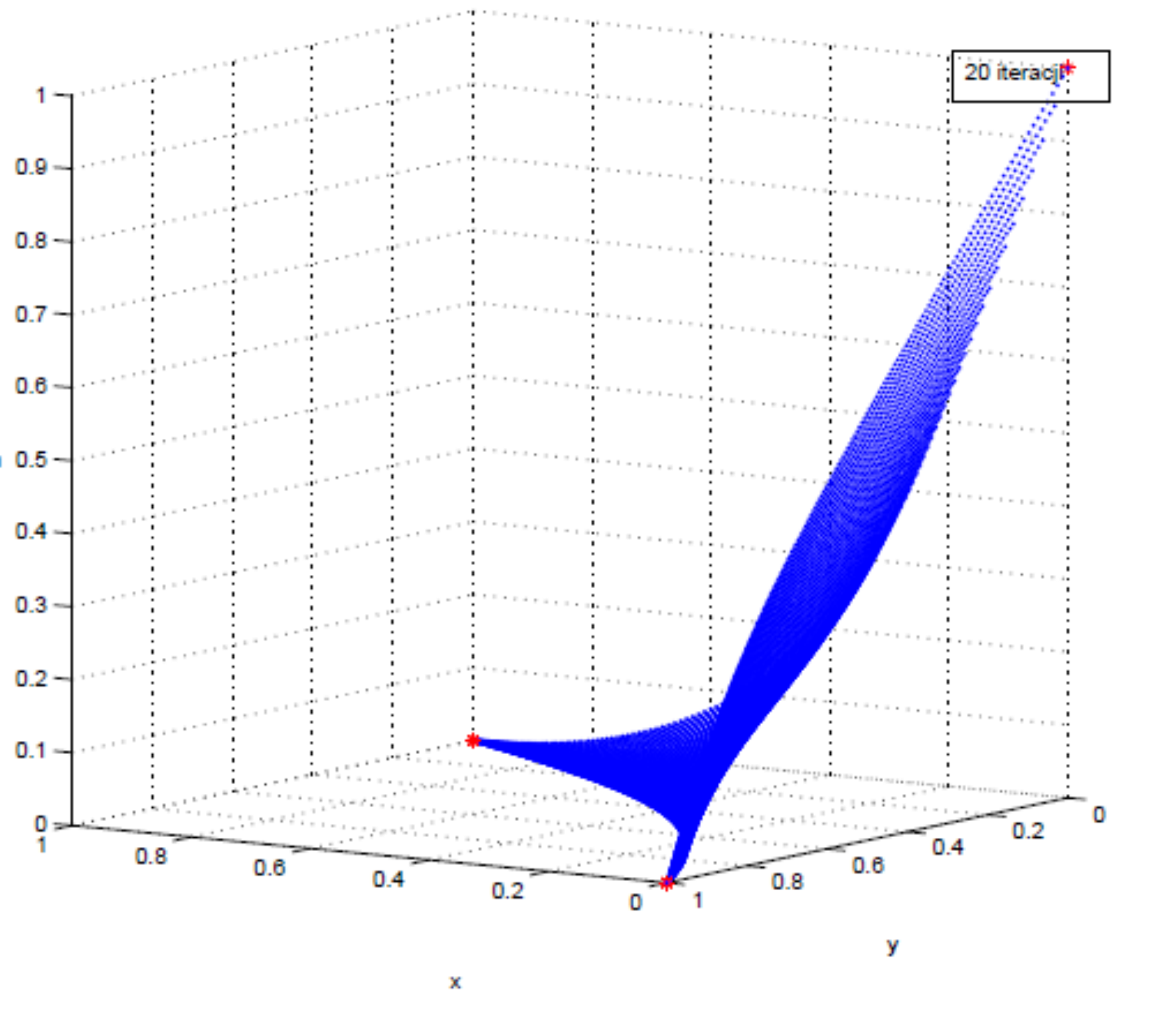}
  \caption{\small This is a picture (taken from~\cite{Mar}) of the carrying simplex that is NOT neatly embedded.  The ODE system is $\dot{x}_1 = x_1 (1 - x_1 - 1.9\, x_2 - 0.2\, x_3)$, $\dot{x}_2 = x_2 (1 - 0.9\; x_1 - x_2 - 0.5\, x_3)$, $\dot{x_3} = x_3 (1 - 4\, x_1 - 1.1\, x_2 - x_3)$.  $S$ is tangent to $C_{\{1,2\}}$ along $S_{\{1,2\}} \setminus \{(0,1,0)\}$, whereas the tangent cone at $(0, 1, 0)$ contains a vector with third coordinate positive.}\label{fig:1}
\end{figure}

\begin{definition}
\label{def:convex}
The carrying simplex $S$ is {\em convex\/} if the set $\{\, {\alpha} x : \alpha \in [0, 1], \ x \in S \,\}$, that is, the global attractor $\Gamma$, is convex.
\end{definition}

We are ready now to formulate the main theorem in the paper.
\begin{thm:main}
Assume that $S$ is convex.  Then $S$ is a $C^1$ submanifold-with-corners neatly embedded in $C$.
\end{thm:main}

In the proof of Main Theorem we will make heavy use of the characterization of the neat embedding of $S$ given in~\cite{J-M-W}.  We introduce now the required concepts.

Let $DP$ denote the linear skew-product dynamical system,
\begin{equation*}
DP(x, v) = (Px, DP(x)v), \quad x \in C, \ v \in V,
\end{equation*}
induced by $P$ on the product bundle $C \times V$.  The restriction of $DP$ to $\Gamma \times V$ is a bundle automorphism.

Fix an invariant ergodic measure $\mu$ supported on the boundary $\partial S$. Let $I(\mu) \subset \{1, \dots, N\}$ be such that $\mu(\dot{S}_{I(\mu)}) = 1$.  By ergodicity, such an $I(\mu)$ is unique.  By (H2), the product bundles $S_{I(\mu)} \times V_{I(\mu)}$ and $S_{I(\mu)} \times V$ are invariant under $DP$.  The Lyapunov exponents for $DP|_{S_{I(\mu)} \times V_{I(\mu)}}$ (for definitions, see, e.g., \cite[Thm.~3.4.11]{Arn}) are called {\em internal Lyapunov exponents\/}.  The smallest internal Lyapunov exponent is called the {\em principal Lyapunov exponent\/}.  For each $j \in \{1, \dots, N\} \setminus I(\mu)$ the bundle $S_{I(\mu)} \times V_{I(\mu) \cup \{j\}}$ is invariant and contains $S_{I(\mu)} \times V_{I(\mu)}$.  The `additional'  Lyapunov exponent for $DP|_{S_{I(\mu)} \times V_{I(\mu) \cup \{j\}}}$ is called the $j$-th {\em external Lyapunov exponent\/}.

For the following, see \cite[Thm.~A]{J-M-W}.
\begin{theorem}
\label{thm:equivalence}
$S$ is a $C^1$ submanifold-with-corners neatly embedded in $C$ if and only if for each ergodic invariant measure $\mu$ supported on $\partial S$ the principal Lyapunov exponent is smaller than all external Lyapunov exponents.
\end{theorem}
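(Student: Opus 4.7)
The target is an equivalence between a geometric property of $S$ (neat embedding) and a dynamical property (strict ordering of Lyapunov exponents along boundary faces), and my plan is to prove each implication by translating through invariant tangent subbundles. Fix a nonempty $I \subsetneq \{1,\dots,N\}$, an ergodic $P$-invariant measure $\mu$ with $\mu(\dot{S}_I)=1$, and $j \in \{1,\dots,N\}\setminus I$. Because (H2) forces $DP$ to preserve both $V_I$ and $V_{I\cup\{j\}}$, the restricted cocycle $DP|_{S_I\times V_{I\cup\{j\}}}$ is block upper triangular: its internal block on $V_I$ carries the internal Oseledets spectrum, whose smallest element is $\lambdaprinc$, while the induced scalar cocycle on the quotient $V_{I\cup\{j\}}/V_I$ has Birkhoff-averaged log-norm equal to the external exponent $\lambda_{\mathrm{ext},j}$.

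For necessity ($\Rightarrow$), assume $\rho\colon\triangle\to(0,\infty)$ is $C^1$. Differentiating $\rho$ at each $x\in\dot{S}_I$ produces, for every $j\notin I$, a tangent vector $w_j(x)\in T_xS\cap V_{I\cup\{j\}}$ with strictly positive $j$-th coordinate; continuity of $D\rho$ and the $DP$-invariance of $TS$ make $w_j$ a continuous equivariant section, so $\RR w_j$ is a $DP$-invariant continuous line in $V_{I\cup\{j\}}$ complementary to $V_I$, and by Birkhoff its Lyapunov exponent equals $\lambda_{\mathrm{ext},j}$. The principal exponent $\lambdaprinc$ is realized, by definition, by the one-dimensional subspace of $V_I$ normal to $T_xS_I$ inside the lower-dimensional carrying simplex. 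To obtain the strict inequality $\lambdaprinc<\lambda_{\mathrm{ext},j}$ I would exploit (H3$'$): the entrywise positivity of $D(P|_{C_I})(x)^{-1}$ combined with the second sentence of (H3$'$) makes the block triangular matrix $DP^{-1}|_{V_{I\cup\{j\}}}$ act as a strict contraction in the Hilbert projective metric on $K\cap V_{I\cup\{j\}}$, forcing the slowest-contracting direction of the forward cocycle to sit in the strict interior of that cone, hence not in $V_I$; the unordered property Theorem~\ref{thm0}(ii) then rules out the degenerate case of equality.

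For sufficiency ($\Leftarrow$), assume the exponent inequality at every ergodic boundary measure and build the $C^1$ parametrization by induction on $|I|$, the base case $|I|=1$ being a standard fixed-point calculation from (H4$'$). For the inductive step, fix a face $\dot{S}_I$; the spectral gap $\lambdaprinc<\lambda_{\mathrm{ext},j}$ is precisely the domination needed to apply a non-uniform invariant-section theorem to the block triangular cocycle $DP|_{S_I\times V_{I\cup\{j\}}}$, producing a $\mu$-a.e.\ measurable $DP$-invariant complement $E_j(x)\subset V_{I\cup\{j\}}$ to $V_I$, one-dimensional with Lyapunov exponent $\lambda_{\mathrm{ext},j}$, and by (H3$'$) lying in $\{v_j>0\}$. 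Monotonicity (H6) is then used to upgrade measurability to continuity on $\dot{S}_I$; a graph-transform argument in a radial chart adapted to the face identifies the continuous plane field $V_I\oplus E_j$ with the tangent cone of the Lipschitz carrying simplex $S$ along $\dot{S}_I$, and globalizing by backward iterates of $P$ and the attractor property (Theorem~\ref{thm0}(iii)) produces a $C^1$ extension of $\rho$ across that stratum. Gluing across lower-dimensional corners invokes the inductive hypothesis.

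The main obstacle is the sufficiency direction, and specifically the upgrade of the Oseledets measurable invariant splitting to a continuous (and then $C^1$) splitting on all of $\dot{S}_I$. Only non-uniform domination is available, so the classical Hirsch--Pugh--Shub $C^1$-section theorem does not apply off the shelf, and one has to exploit the structural hypotheses (H3$'$)--(H6) to rule out collapse of the invariant cone along orbits of arbitrarily small $\mu$-measure. A secondary but delicate point is the gluing across corners: the continuous tangent fields constructed face-by-face must be shown to agree on nested faces of different dimensions so that $\rho$ is unambiguously $C^1$ up to and including the boundary of $\triangle$, which is the content of the inductive compatibility built into the argument.
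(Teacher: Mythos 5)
The first thing to note is that the paper does not prove Theorem~\ref{thm:equivalence} at all: it is imported verbatim as \cite[Thm.~A]{J-M-W}, and the present paper only uses it (together with Proposition~\ref{prop:induction}, also quoted from \cite{J-M-W}) to reduce the Main Theorem to Theorem~\ref{thm:main-alt}. So your proposal is not competing with an in-paper argument but with a long external paper, and as a proof of that result it has genuine gaps in both directions. In the necessity direction, the step that is supposed to produce strictness fails: for $\mu$ supported on $\dot{S}_I$ the orbit lies in $C_I$, which is invariant, so at such points $\partial P_j/\partial x_i=0$ for $i\in I$ and the cocycle on $V_{I\cup\{j\}}$ (and its inverse) is block triangular; it maps $V_I$ into $V_I$ and therefore does not map $K_{I\cup\{j\}}\setminus\{0\}$ into the open cone, so there is no strict contraction in the Hilbert projective metric, and (H3$'$) only gives positivity of the $I\times I$ block plus one positive entry in the extra column. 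Moreover, your claim that $\lambdaprinc$ is ``realized, by definition, by the one-dimensional subspace of $V_I$ normal to $T_xS_I$'' is not the definition (the principal exponent is the smallest internal exponent; its Oseledets direction is not a normal direction), and ``the unordered property then rules out the degenerate case of equality'' is an assertion, not an argument. Extracting the strict inequality $\lambdaprinc<\lambdaext$ from $C^1$ neat embedding is exactly the hard content of the `only if' half of \cite[Thm.~A]{J-M-W} and is nowhere established in the sketch.

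In the sufficiency direction you defer precisely the core difficulty, as you acknowledge: passing from the a.e., non-uniform Oseledets domination $\lambdaprinc<\lambdaext$ to a \emph{uniform} exponential separation and a continuous invariant decomposition along the whole face, and then to $C^1$ regularity of the radial graph and compatibility at the corners. There is no ``non-uniform invariant-section theorem'' that can simply be cited here, and the appeal to (H6) to ``upgrade measurability to continuity'' is not substantiated. The key idea you are missing is the one underlying \cite{J-M-W} (visible in the present paper as Proposition~\ref{prop:induction}, i.e.\ \cite[Thm.~5.1]{J-M-W}): the hypothesis is an exponent inequality for \emph{every} ergodic measure on the boundary, and a compactness/ergodic-decomposition argument converts this family of pointwise statements into the uniform estimate~\eqref{eq:exp-sep}, which is what feeds the inductive construction of the $C^1$ neatly embedded faces and their gluing. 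Without that step (or a substitute for it), the inductive scheme you outline does not get off the ground, so the proposal should be regarded as a plan with the decisive lemmas unproven rather than a proof; if the intent was only to use Theorem~\ref{thm:equivalence}, the correct move is to cite \cite[Thm.~A]{J-M-W} as the paper does.
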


\section{Proof of the Main Theorem}
\label{sect:main-proof}

Our aim in this section is the proof of the following result.
\begin{theorem}
  \label{thm:main-alt}
  Assume that $S$ is convex.  Then for each ergodic invariant measure $\mu$ supported on $\partial S$ the principal Lyapunov exponent is smaller than all external Lyapunov exponents.
\end{theorem}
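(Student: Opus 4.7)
My plan is to prove Theorem~\ref{thm:main-alt} by induction on $N$. Fix an ergodic invariant $\mu$ on $\dot S_I$ with $I=I(\mu)$ and $j\notin I$. If $I\cup\{j\}\subsetneq\{1,\dots,N\}$, I restrict $P$ to the invariant face $C_{I\cup\{j\}}$: by Remark~\ref{rem-1} this satisfies (H1)--(H6), its global attractor $\Gamma\cap C_{I\cup\{j\}}$ remains convex (intersection of two convex sets), and the principal and $j$-th external Lyapunov exponents at $\mu$ are unchanged, so the inductive hypothesis closes the case. It thus suffices to treat the codimension-one situation $I=\{1,\dots,N\}\setminus\{j\}$.

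In that situation, $P_j\equiv 0$ on $C_I$ forces the upper-triangular block form $DP(x)=\left(\begin{smallmatrix}A(x)&c(x)\\0&d(x)\end{smallmatrix}\right)$ relative to $V_I\oplus V_{\{j\}}$ for $x\in\dot C_I$, with $d(x)=\partial P_j/\partial x_j(x)>0$, $A(x)^{-1}$ entrywise positive by (H3$'$), and, applying (H3$'$) to $v=e_j$, $A(x)^{-1}c(x)$ having at least one strictly negative coordinate. The measurable Perron--Frobenius theorem yields a $\mu$-measurable positive eigensection $e(x)\in\dot K_I$ satisfying $A(x)e(x)=\mu^*(x)e(Px)$, so $\lambdaprinc=\int\log\mu^*\,d\mu$; the one-dimensional external cocycle gives $\lambdaext^{(j)}=\int\log d\,d\mu$. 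Convexity of $\Gamma$ then supplies tangent geometry: at each $x\in\dot S_I$, unorderedness of $S_I$ under $<_I$ implies $T_x\Gamma_I\cap K_I=\{0\}$, so strict separation of closed convex cones produces a strictly positive outward normal $\nu(x)\in\dot K_I$ to $\Gamma_I$ at $x$, and in particular $-e(x)$ lies in the relative interior of $T_x\Gamma\cap V_I$. Joining $x$ by a segment to any interior point of $\Gamma\cap C^\circ$ yields a tangent vector in $T_x\Gamma$ with strictly positive $j$-component, so convex combination and a measurable selection produce a Borel section $w(x)=-e(x)+t(x)e_j+u(x)\in T_x\Gamma$ with $t(x)>0$ and $u(x)\in V_I$.

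The inequality is then obtained by comparing growth rates along $P$-orbits. Invariance of $\Gamma$ yields $DP^n(x)w(x)\in T_{P^nx}\Gamma\subset\{v:v_j\ge 0\}$ for all $n\ge 0$; the block recursion evolves the $e_j$-coefficient of $DP^n(x)w(x)$ exactly as $t(x)\prod_{k=0}^{n-1}d(P^kx)$, Birkhoff-growing at rate $\lambdaext^{(j)}$, while the $V_I$-projection carries a principal-rate piece $-\bigl(\prod_{k=0}^{n-1}\mu^*(P^kx)\bigr)e(P^nx)$ together with propagated cross-terms of the form $t(P^kx)\,(A\text{-propagated }c(P^kx))$. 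Pairing the $V_I$-projection against $\nu(P^nx)$ gives a quantity which, by the tangent-half-space inclusion, is forced to be $\le 0$; the strict negativity of $\nu^\top\cdot(\text{propagated }c)$ (traceable to the sign of $A^{-1}c$ from (H3$'$)) combined with Birkhoff's ergodic theorem then produces $\lambdaext^{(j)}>\lambdaprinc$. The main obstacle I anticipate is ensuring the \emph{strict} inequality survives the averaging: the per-step negative contribution from $c(x)$ must not be cancelled by the $u$-terms, which I would address by arranging $t(x)$ to be bounded away from $0$ along $\mu$-a.e.\ orbit (via measurable selection on the compact section $\{v\in T_x\Gamma:\norm{v}=1,\,v_j\ge\delta\}$) and choosing $u(x)$ in a covariant Oseledets-complementary subspace to $\spanned\{e(x)\}$.
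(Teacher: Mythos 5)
Your overall strategy is recognizably the same as the paper's: reduce by induction/restriction to the codimension-one case $I=\{1,\dots,N\}\setminus\{j\}$, exploit the block-triangular form of $DP$ along $C_I$ and the positive (Perron--Frobenius) direction carrying $\lambdaprinc$, use convexity to produce supporting hyperplanes, and compare the growth of the $e_j$-component with the growth of the principal component along orbits. However, there are two genuine gaps at exactly the points where the real work lies.

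First, the convexity input you extract is too weak. Your normal $\nu(x)\in\dot K_I$ separates $T_x\Gamma_I$ from $K_I$ \emph{inside $V_I$}; it says nothing about vectors of $T_x\Gamma$ with nonzero $j$-component, so the step ``pairing the $V_I$-projection of $DP^n(x)w(x)$ against $\nu(P^nx)$ is forced to be $\le 0$'' does not follow: the $V_I$-projection of the full tangent cone of $\Gamma$ is in general much larger than $T_x\Gamma\cap V_I$. What is actually needed (and what the paper's Lemma~\ref{lm-3} supplies) is a supporting hyperplane in the \emph{full} space $V$, obtained by passing hyperplanes through the axial fixed point $u_N$ and simplices in $S_I$ shrinking to $x$; its normal lies in $K^{\circ}$ and yields the quantitative bound $\alpha(z)/\beta(z)\ge\mathrm{const}>0$ linking the $e_j$-component and the $-r(x)$-component of every tangent-cone vector of $S$. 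Without a hyperplane that ``tilts'' into the $j$-direction, no relation between $\lambdaext$ and $\lambdaprinc$ can be extracted from convexity of the face alone.

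Second, your growth-rate comparison does not close. Your section $w(x)=-e(x)+t(x)e_j+u(x)$ is not carried by $DP$ into a section of the same form: the cross terms $\bigl(\prod A\bigr)c$ propagated by the full internal cocycle can grow at the \emph{top} internal exponent and are not controlled by placing $u(x)$ in an Oseledets complement. The paper avoids this by first constructing an \emph{invariant} measurable family $z(x)\in\mathcal{C}_1(x)\cap\mathcal{E}(x)$ inside the two-dimensional slow Oseledets subspace $\mathcal{E}(x)\supset\mathcal{R}_I(x)$ (this requires Lemma~\ref{add-lemma-3}, i.e.\ convexity of $\delta\mapsto j_{(x,v)}(\delta)$, to know the tangent cone meets $\mathcal{E}(x)$ in a single ray), so that the recursion reduces to the exact scalar identity $\beta(z(x))a(x)=\alpha(z(x))c(x)+\beta(z(P^{-2}x))d(x)$. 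Even then, deducing the strict inequality is the delicate part: $\ln\beta(z(\cdot))$ need not be $\mu$-integrable, so the telescoping term cannot simply be discarded in the Birkhoff average; the paper handles this with a separate recurrence argument (the sets $O^{(1)},O^{(2)}$ and the subsequence $n_m$). Your proposed remedy (keeping $t(x)$ bounded below) does not address either the uncontrolled cross terms or the telescoping issue, so the ``main obstacle'' you flag is in fact not resolved by your plan.
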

In view of the characterization given in Theorem~\ref{thm:equivalence} this will be equivalent to proving the Main Theorem.

\medskip
The proof goes by induction on the cardinality of $I(\mu)$.

\smallskip
Let, for $1 \le k \le N - 1$, (H$_k$) denote the following statement:

    \noindent {\em For any ergodic invariant measure $\mu$ with $\card{I(\mu)} \le k$ the principal Lyapunov exponent is smaller than the external Lyapunov exponents.}

\smallskip
A main role in the inductive procedure employed in~\cite{J-M-W} in the proof of the `if' part  of Theorem~\ref{thm:equivalence} (in our notation)  was played by the following result (cf.~Fundamental Induction Hypothesis on p.~1640 of~\cite{J-M-W}).  It will play a similar role in the proof of the Main Theorem here.

\begin{proposition}
\label{prop:induction}
    Assume \textup{(H$_k$)}.  Then each $S_I$ with $\card{I} = k$ is a $C^1$ submanifold-with-corners neatly embedded in $C_I$.  Moreover, there is an invariant Whitney sum decomposition
    \begin{equation}
    \label{new-eq-1}
    S_{I} \times V_{I} = \mathcal{T} S_{I} \oplus \mathcal{R}_{I},
    \end{equation}
    where $\mathcal{T} S_{I}$ stands for the tangent bundle of $S_{I}$, with the following properties:
    \begin{enumerate}
        \item[\textup{(i)}]
        The fiber $\mathcal{R}_{I}(x)$ of $\mathcal{R}_{I}$ over $x \in S_{I}$ can be written as $\spanned\{r(x)\}$, where $r \colon S_{I} \to V_{I}$ is continuous, with $\norm{r(x)} = 1$ for all $x \in S_{I}$, and $r(x) \in \dot{K}_{I}$ if $x \in \dot{S}_{I}$;
        \item[\textup{(ii)}]
        For any $x \in S_{I}$ the tangent space $\mathcal{T}_{x} S_{I}$ of $S_{I}$ at $x$ intersects $K_{I}$ only at $0$;
\usetagform{default}
        \item[\textup{(iii)}]
        There are $C > 0$ and $\nu > 0$ such that
        \begin{equation}
        \label{eq:exp-sep}
        \frac{\norm{DP^n(x) r(x)}}{\norm{DP^n(x) w}} \le C e^{{-\nu} n}
        \end{equation}
        for all $x \in S_{I}$, non-zero $w \in \mathcal{T}_{x}S_{I}$ and all $n \in \NN$.
    \end{enumerate}
\end{proposition}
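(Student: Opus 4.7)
The plan is to treat the proposition as two separate assertions. Part~(a): $S_I$ is a $C^1$ submanifold-with-corners neatly embedded in $C_I$ for every $I$ with $\card{I}=k$. Part~(b): there exists a continuous invariant line bundle $\mathcal{R}_I$ over $S_I$ satisfying (i)--(iii), splitting $S_I \times V_I$ as in~(\ref{new-eq-1}). Part~(a) follows by applying Theorem~\ref{thm:equivalence} to the restricted map $P|_{C_I}$. By Remark~\ref{rem-1}, $P|_{C_I}$ satisfies the analogues of (H1)--(H6), and its carrying simplex is $S_I$ by uniqueness, since $S_I$ is compact, invariant, unordered, and attracts all nontrivial orbits in $C_I$. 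Any ergodic invariant measure $\nu$ supported on the relative boundary $S_I \setminus \dot{S}_I$ has $I(\nu) \subsetneq I$, hence $\card{I(\nu)} < k$, so (H$_k$) delivers the desired Lyapunov-exponent inequality -- in particular for those external directions $j \in I \setminus I(\nu)$ that are relevant to the restricted problem. Theorem~\ref{thm:equivalence} then yields~(a).

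For part~(b), I would construct $r$ via a Perron--Frobenius / Birkhoff argument. Assumption (H3$'$) makes $DP(x)^{-1}$ strongly positive on $V_I$ for every $x \in \dot{C}_I$, so by Birkhoff's inequality it contracts the Hilbert projective metric on $K_I$ by a factor strictly below one, uniformly on the compact set $\dot{S}_I$. Consequently, for any fixed $v \in \dot{K}_I$, the normalized backward iterates $DP^{-n}(P^n x)\,v / \norm{DP^{-n}(P^n x)\,v}$ converge exponentially fast in $n$ to a $v$-independent unit vector $r(x) \in \dot{K}_I$, depending continuously on $x \in \dot{S}_I$ and satisfying $DP(x) r(x) \in \spanned\{r(Px)\}$. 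Extension of $r$ to boundary points of $S_I$ is obtained by running the same construction inside each $\dot{C}_J$, $J \subsetneq I$, with continuity across corners following from uniqueness of the Perron direction and from the neat embedding already produced in~(a).

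Property~(ii) is proved by contradiction: given $0 \neq v \in \mathcal{T}_x S_I \cap K_I$, one pullback by $DP^{-1}$ together with (H3$'$) lets us assume $v \in \dot{K}_I$ and $x \in \dot{S}_I$; a $C^1$-curve $\gamma \subset S_I$ with $\gamma(0)=x$, $\gamma'(0)=v$ then produces points $\gamma(t) \gg_I x$ for small $t>0$, contradicting Theorem~\ref{thm0}(ii). Since $r(x) \in K_I$, (ii) immediately gives transversality of $\mathcal{R}_I$ to $\mathcal{T}S_I$, hence the decomposition~(\ref{new-eq-1}) together with~(i). Property~(iii) is the quantitative form of the Hilbert contraction: the uniform exponential contraction of $DP^{-1}$ on the Hilbert metric of $K_I$ translates, upon inverting time, into a uniform exponential domination of $\mathcal{T}_x S_I$ over $\mathcal{R}_I(x)$ under forward iteration of $DP$, with a rate $\nu > 0$ dictated by the Birkhoff contraction factor and a constant $C$ determined by the geometry of $S_I$.

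The step I expect to be the principal obstacle is the boundary behaviour. At $x \in \partial S_I$ the matrix $DP(x)^{-1}$ is only strongly positive on the appropriate sub-cone $V_J$ rather than on all of $V_I$, so the Birkhoff construction has to be carried out face by face and then glued together. Ensuring that the resulting extension of $r$ is genuinely continuous across corners, and that the separation rate $\nu$ in~(\ref{eq:exp-sep}) remains uniform over all of $S_I$ rather than deteriorating near its boundary, will require a careful compactness argument using (H1)--(H6) on each $P|_{C_J}$ in combination with the neat embedding from~(a).
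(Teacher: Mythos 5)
The paper disposes of this proposition in one line: by Remark~\ref{rem-1} the restriction $P|_{C_I}$ satisfies (H1)--(H6), and the statement is then exactly \cite[Thm.~5.1]{J-M-W} applied to that restriction. Your part~(a) is consistent with this (it is essentially how the induction in \cite{J-M-W} is organized, and your observation that any ergodic measure on $\partial S_I$ has $\card{I(\nu)}<k$, so that (H$_k$) supplies the required exponent inequalities for the restricted system, is correct). Your part~(b), however, is where the actual content of the cited theorem lies, and as written it has a genuine gap --- one you yourself flag but do not close.

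Concretely: $\dot S_I$ is \emph{not} compact, and the Birkhoff contraction factor of $DP(x)^{-1}$ on the Hilbert projective metric of $K_I$ is controlled by the entries of $(DP(x)^{-1})_I$; by (H3$'$) these are all positive only for $x\in\dot C_I$, and some of them may vanish as $x$ approaches $\partial C_I$. Hence there is no uniform projective contraction rate on $\dot S_I$, the convergence of the normalized backward iterates to $r(x)$ need not be uniform, and neither the continuity of $r$ up to $\partial S_I$ nor the uniformity of $C$ and $\nu$ in \eqref{eq:exp-sep} follows from a face-by-face construction without substantial further argument. This boundary uniformity is precisely what occupies the proof of \cite[Thm.~5.1]{J-M-W}: one must combine the inductively known neat embedding and exponential separation on the faces $S_J$, $J\subsetneq I$, with the second clause of (H3$'$) and a compactness argument to glue the interior contraction to the boundary data. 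A secondary, fixable slip: in your proof of (ii) at a point $x\in\partial S_I$ you cannot ``assume $x\in\dot S_I$'' after pulling back, since $P^{-1}$ preserves $\partial C_I$; instead one uses the second sentence of (H3$'$) to show that a non-zero $v\in\mathcal{T}_xS_I\cap K_I$ acquires a positive $J$-coordinate under $DP^{-n}(x)$, and then invokes Theorem~\ref{thm0}(ii) on the face $\dot C_J$ containing the backward orbit. In short, your strategy coincides with that of the result the paper cites, but the boundary behaviour you defer is the theorem rather than a technicality; the paper itself avoids reproving it by citation.
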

\begin{proof}
In view of Remark~\ref{rem-1}, this is \cite[Thm.~5.1]{J-M-W} applied to the restriction of $P$ to $C_{I}$.
\end{proof}

We need to show that (H$_{k-1}$) implies (H$_{k}$).  Again using Remark~\ref{rem-1}, we shall prove only the last inductive step, that is, that (H$_{N-2}$) implies (H$_{N-1}$).  So, we need to show that for any ergodic invariant measure supported on $\dot{S}_{I}$ with $\card{I} = N - 1$ its principal Lyapunov exponent is smaller than its (unique) external Lyapunov exponent.  For notational simplicity we take $I = \{1, \dots, N - 1\}$.

\subsection{Geometrical considerations}
For $x \in S$ we define
\begin{equation*}
\mathcal{C}_1(x) := \{\, v \in V: \exists\ (x^{(k)})_{k = 1}^{\infty} \subset S \setminus \{x\}, x^{(k)} \to x, \frac{x^{(k)} - x}{\norm{x^{(k)} - x}} \to v \,\}.
\end{equation*}
$\mathcal{C}(x) : = \{\, {\lambda} x : \lambda \ge 0, v \in  \mathcal{C}_1(x)\,\}$ is called the {\em tangent cone\/} of $S$ at $x$.  $\mathcal{C}(x)$ is a non-trivial (that is, not containing only $0$) closed subset of $V$.

\medskip
Observe that if $x \in S^{\circ}$ then it follows from Theorem~\ref{thm0}(i) that the orthogonal projection of $\mathcal{C}(x)$ along $(1, \dots, 1)$ equals $\{\, v \in V : \sum_{i=1}^{N} v_i = 0 \,\}$.  When $S^{\circ}$ is $C^1$, $\mathcal{C}(x) = \mathcal{T}_{x} S$, the tangent space of $S$ at $x$, for all $x \in S^{\circ}$.

For a carrying simplex neatly embedded in $C$, at each $x \in \partial S$ the tangent space $\mathcal{T}_{x} S$ equals $\spanned{\mathcal{C}(x)}$.  To illustrate how, generally, the families of tangent cones can look like, let us go back to the example in Figure~\ref{fig:1}.  For $x \in S_{\{1,2\}} \setminus \{(0,1,0)\}$ there holds $\spanned{\mathcal{C}(x)} = V_{\{1,2\}}$, whereas $\spanned{\mathcal{C}((0,1,0))}$ is a two-dimensional subspace transverse to $V_{\{2\}}$ (for proofs see~\cite[Example 8.3]{J-M-W}).
\smallskip

Further, $DP(x) \mathcal{C}(x) = \mathcal{C}(P(x))$ for any $x \in S$.

\medskip
Families of tangent cones will play a significant role in the proof of the Main Theorem.  Assuming, {\em per contra\/}, that for some ergodic invariant measure the external Lyapunov exponent is not larger than the principal Lyapunov exponent we will construct an invariant measurable family of tangent vectors, and it will turn out that its existence is incompatible with the inequalities on the Lyapunov exponents.

\medskip
Let $e_N = (0, \dots, 0, 1)$.  For $x \in S_{I}$ we decompose $z \in \mathcal{C}_1(x)$ as
\begin{equation}
\label{decomp_z}
z = \alpha(z) e_N - \beta(z) r(x) + w(z),
\end{equation}
with real $\alpha(z)$, $\beta(z)$ and $w(z) \in \mathcal{T}_{x} S_I$.  Since the $N$-th coordinates of $r(x)$ and $w(z)$ are zero, the $N$-th coordinate of $z$ is non-negative, and $\norm{z} = 1$, we have $0 \le \alpha(z) \le 1$.

\begin{lemma}
\label{lm-1}
$\beta(z) \ge 0$ for any $z \in \mathcal{C}_1(x)$, $x \in \dot{S}_{I}$.
\end{lemma}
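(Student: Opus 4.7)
My plan is to exploit convexity of $\Gamma$ to construct a supporting linear functional $\ell$ at $x$ that vanishes on the subspace $\mathcal{T}_x S_I \oplus \RR\,e_N$ and is strictly positive on $r(x)$.  Granted such an $\ell$, and since $z \in \mathcal{C}_1(x)$ is a limit of secants $(x^{(k)}-x)/\norm{x^{(k)}-x}$ with $x^{(k)} \in S \subset \Gamma$, we have $\ell(z) \le 0$; substituting (\ref{decomp_z}) collapses this inequality to $0 \ge \ell(z) = -\beta(z)\,\ell(r(x))$, which, since $\ell(r(x)) > 0$, forces $\beta(z) \ge 0$.

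The cornerstone observation is that the half-line $\{x + t\,r(x) : t > 0\}$ is disjoint from $\Gamma$.  Indeed, if $x + t\,r(x) \in \Gamma$ for some $t > 0$, then, because $x \in \dot{C}_I$ and $r(x) \in V_I$, this point lies in $\dot{C}_I$, and by Theorem~\ref{thm0}(iv) it can be written as $\alpha y$ with $\alpha \in (0,1]$ and $y \in \dot{S}_I$.  A direct computation using $r(x) \in \dot{K}_I$ and $\alpha \le 1$ yields $y_i = (x_i + t\,r(x)_i)/\alpha > x_i$ for every $i \in I$, giving $y >_I x$ with both points in $\dot{S}_I$, in contradiction to Theorem~\ref{thm0}(ii).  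Hence $r(x) \notin T_x\Gamma$, and separating the closed convex cone $T_x\Gamma$ from the point $r(x)$ produces a functional $\ell_0 \in V^*$ with $\ell_0(v-x) \le 0$ for $v \in \Gamma$ and $\ell_0(r(x)) > 0$.

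To convert $\ell_0$ into $\ell$, I will make two adjustments.  First, $\mathcal{T}_x S_I$ is a linear subspace of $V_I$ sitting inside $T_x\Gamma$ (its elements arise as limits of secants of sequences in $S_I \subset \Gamma$), so applying $\ell_0 \le 0$ to $\pm w$ for $w \in \mathcal{T}_x S_I$ forces $\ell_0 \equiv 0$ on $\mathcal{T}_x S_I$.  Second, because $\Gamma \subset C$ and $x_N = 0$, the functional $v \mapsto -v_N$ also belongs to the normal cone $N_x\Gamma$; adding a non-negative multiple of it to $\ell_0$ preserves the supporting property and does not change $\ell_0$ on $V_I$ (hence $\ell_0(r(x))$ and $\ell_0|_{\mathcal{T}_x S_I}$ are untouched), while it shifts $\ell_0(e_N)$ downward.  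A suitable choice of coefficient then cancels $\ell_0(e_N)$, yielding the desired $\ell$.

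The step I expect to be the main obstacle is the validity of this last cancellation: subtracting a non-negative multiple of $v \mapsto v_N$ from $\ell_0$ can only push $\ell_0(e_N)$ downward, so making $\ell_0(e_N) = 0$ is feasible only when $\ell_0(e_N) \ge 0$ at the outset --- equivalently, when $e_N$ is not in the relative interior of $T_x\Gamma$.  To secure this I plan to appeal to the scaling structure of Theorem~\ref{thm0}(iv) and the $\ll$-unorderedness of $S$, combined with the $DP$-invariance of both the splitting and of the tangent cone $\mathcal{C}$, in order to rule out a ``vertical edge'' of $S$ at $x$ in the $e_N$ direction and thereby produce a supporting functional with $\ell_0(r(x)) > 0$ and $\ell_0(e_N) \ge 0$ simultaneously.
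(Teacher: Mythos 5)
Your overall strategy (dualize via a supporting functional of the convex set $\Gamma$) is genuinely different from the paper's, which does not use convexity at all here: the paper supposes $\beta(z)<0$, observes that $\alpha(z)e_N-\beta(z)r(x)$ then lies in $K^{\circ}$, and manufactures two points of $S$ ordered by $\ll$ --- one taken from the sequence defining $z$, the other lying on a $C^1$ arc inside $S_I$ tangent to $w(z)$, available by Proposition~\ref{prop:induction} --- contradicting the unorderedness in Theorem~\ref{thm0}(ii). Your route is not unreasonable, but as written it has two gaps. The smaller one: for a convex set a tangent-cone direction need not be a feasible direction, so disjointness of the half-line $\{\,x+t\,r(x): t>0\,\}$ from $\Gamma$ does \emph{not} imply $r(x)\notin T_x\Gamma$ (consider $\{(a,b): b\ge \max(a,0)^2\}$ at the origin with the direction $(1,0)$: the half-line misses the set, yet the direction is in the tangent cone). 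Establishing $r(x)\notin T_x\Gamma$ requires a separate limiting argument (write $v^{(k)}=\alpha_k y^{(k)}$ with $y^{(k)}\in S$ via Theorem~\ref{thm0}(iv) and invoke unorderedness on the $y^{(k)}$); the half-line observation alone does not deliver it.

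The second gap is fatal as the proposal stands. You need a supporting functional $\ell_0$ of $\Gamma$ at $x$ with $\ell_0(r(x))>0$ \emph{and} $\ell_0(e_N)\ge 0$, so that adding a non-negative multiple of $v\mapsto -v_N$ can annihilate $\ell_0(e_N)$ without disturbing $\ell_0(r(x))$. But the existence of such an $\ell_0$ is precisely the dual form of the statement being proved: if some $z=\alpha(z)e_N-\beta(z)r(x)+w(z)\in\mathcal{C}_1(x)\subset T_x\Gamma$ had $\beta(z)<0$, then every normal functional with $\ell_0(e_N)\ge 0$ and $\ell_0|_{\mathcal{T}_xS_I}=0$ would satisfy $0\ge \ell_0(z)=\alpha(z)\ell_0(e_N)+\abs{\beta(z)}\,\ell_0(r(x))\ge \abs{\beta(z)}\,\ell_0(r(x))$, forcing $\ell_0(r(x))\le 0$. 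So the functional you need exists only if the conclusion already holds, and separation theory gives no independent leverage; the paragraph in which you ``plan to appeal to the scaling structure \dots\ to rule out a vertical edge'' is exactly where the entire proof has to live, and it is not carried out. Note also that the paper deliberately proves Lemmas~\ref{lm-1} and~\ref{lm-2} without convexity; convexity enters only in Lemma~\ref{lm-3}, where a supporting-hyperplane argument close in spirit to yours is indeed the right tool.
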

\begin{proof}
Suppose that there are $x \in \dot{S}_I$ and $z \in \mathcal{C}_1(x)$ such that $\beta(z)$ in~\eqref{decomp_z} is negative.  Then it follows from Proposition~\ref{prop:induction}(i) that $\alpha(z) e_N - \beta(z) r(x) \in K^{\circ}$.

Assume that $w(z) \ne 0$.  Since $z \in \mathcal{C}_1(x)$, there is a sequence $(x^{(k)})_{k=1}^{\infty} \subset S \setminus \{x\}$ converging to $x$ and such that for each $\epsilon > 0$
\begin{equation}
\label{eq-1}
\left\lVert \frac{x^{(k)} - x}{\norm{x^{(k)} - x}} - (\alpha(z) e_{N} - \beta(z) r(x) + w(z) ) \right\rVert < \epsilon
\end{equation}
for $n$ sufficiently large.  By Proposition~\ref{prop:induction}, $\dot{S}_I$ is a $C^1$ $(N - 2)$-dimensional manifold, so there exists a $C^1$ arc $A \subset S$ tangent at $x$ to $w(z)$. Consequently, for any (sufficiently large) $k$ there exists $\tilde{x}^{(k)} \in A$ such that $\norm{\tilde{x}^{(k)} - x} = \norm{w(z)} \, \norm{x^{(k)} - x}$.  Further, as $A$ is tangent at $x$ to $\gamma(z) w$, for each $\epsilon > 0$ there holds
\begin{equation*}
\left\lVert \frac{\tilde{x}^{(k)} - x}{\norm{\tilde{x}^{(k)} - x}} - \frac{w(z)}{\norm{w(z)}} \right\rVert < \frac{\epsilon}{\norm{w(z)}},
\end{equation*}
consequently
\begin{equation}
\label{eq-2}
\left\lVert \frac{\tilde{x}^{(k)} - x}{\norm{x^{(k)} - x}} - w(z)  \right\rVert < \epsilon,
\end{equation}
for $k$ sufficiently large.  Putting together \eqref{eq-1} and~\eqref{eq-2} we see that
\begin{equation*}
\left\lVert \frac{x^{(k)} - \tilde{x}^{(k)}}{\norm{x^{(k)} - x}} - (\alpha(z) e_N - \beta(z) r(x)) \right\rVert < 2 \epsilon
\end{equation*}
for $k$ sufficiently large.  Take now $\epsilon > 0$ so small that vectors within $2\epsilon$ of $\alpha(z) e_N - \beta(z) r(x)$ belong to $K^{\circ}$.  Therefore, for some $k$, $\tilde{x}^{(k)} \ll x^{(n)}$, which is impossible.  The case $w(z) = 0$ is considered in a similar (but simpler) way.
\end{proof}

\begin{lemma}
\label{lm-2}
$\beta(z)/\alpha(z)$ is positive and bounded away from zero, uniformly in $z \in \mathcal{C}_1(x) \setminus V_I$, $x \in \dot{S}_I$.
\end{lemma}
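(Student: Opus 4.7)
The plan is to exploit convexity of $\Gamma$ via the axial fixed point $u_N = u_N^{(N)} e_N \in S \subset \Gamma$. Since $u_N, x \in \Gamma$, convexity gives $u_N - x \in T(\Gamma, x) \supset \mathcal{C}(x)$. Decomposing $-x \in V_I$ as $-x = -\beta_0(x)\, r(x) + w_0(x)$ with $w_0(x) \in \mathcal{T}_x S_I$, one has
\[
u_N - x = u_N^{(N)} e_N - \beta_0(x)\, r(x) + w_0(x),
\]
and an argument analogous to Lemma~\ref{lm-1} shows $\beta_0(x) > 0$: if $\beta_0(x) \le 0$, then $-w_0(x) = x - \beta_0(x)\, r(x) \in \dot{K}_I$ (using $x, r(x) \in \dot{K}_I$ from Proposition~\ref{prop:induction}~(i)), and Proposition~\ref{prop:induction}~(ii) then forces $w_0(x) = 0$, making $x = \beta_0(x)\, r(x)$ incompatible with $x \in \dot{K}_I$. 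By continuity of $r$ on $S_I$ (Proposition~\ref{prop:induction}~(i)), the candidate lower bound $x \mapsto \beta_0(x)/u_N^{(N)}$ is continuous and strictly positive on $\dot{S}_I$.

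I would then prove the pointwise inequality $\beta(z)/\alpha(z) \ge \beta_0(x)/u_N^{(N)}$ for $z \in \mathcal{C}_1(x) \setminus V_I$ by contradiction. Assuming the reverse strict inequality, take a realising sequence $x^{(k)} \in S^{\circ}$ with $t_k = \norm{x^{(k)} - x}$ and form the auxiliary convex combination in $\Gamma$
\[
y^{(k)} = \frac{x^{(k)}_N}{u_N^{(N)}}\, u_N + \Bigl(1 - \frac{x^{(k)}_N}{u_N^{(N)}}\Bigr)\, \tilde{x}^{(k)},
\]
with $\tilde{x}^{(k)} \in S_I$ the $C^1$ tangent approximation in direction $w(z)$ provided by Proposition~\ref{prop:induction}. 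A direct expansion yields $(x^{(k)} - y^{(k)})_N = 0$ together with
\[
(x^{(k)} - y^{(k)})|_I = t_k \Bigl[\bigl(\alpha(z)\,\beta_0(x)/u_N^{(N)} - \beta(z)\bigr)\, r(x) - (\alpha(z)/u_N^{(N)})\, w_0(x)\Bigr] + o(t_k),
\]
whose $r(x)$-coefficient is strictly positive by the contradictory assumption. Comparing $x^{(k)}$ to the radial projection of $y^{(k)}$ onto $S$, and using the $C^1$ structure of $S_I$ to absorb the tangential $w_0(x)$-contribution into a suitably corrected tangent approximation, yields a strict componentwise ordering of two points in $S$, contradicting Theorem~\ref{thm0}~(ii).

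Uniformity in $x$ follows from continuous dependence of $r$ on $x \in S_I$ (Proposition~\ref{prop:induction}~(i)) and compactness of the closure of $\dot{S}_I$ in $S_I$: the strictly positive continuous function $x \mapsto \beta_0(x)/u_N^{(N)}$ has a strictly positive infimum. The main obstacle is making the
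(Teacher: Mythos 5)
There is a genuine gap, and it is one of direction: the inequality you are trying to extract from convexity points the wrong way. Note that the hyperplane $\widetilde{L}$ of Lemma~\ref{lm-3} contains $x + \mathcal{T}_{x}S_{I}$ and passes through $u_N$, so its normal $\tilde{p}(x)$ satisfies $\langle u_N - x, \tilde{p}(x)\rangle = 0$ and $\langle w_0(x), \tilde{p}(x)\rangle = 0$, whence $\langle e_N, \tilde{p}(x)\rangle/\langle r(x), \tilde{p}(x)\rangle = \beta_0(x)/u_N^{(N)}$ in your notation; Lemma~\ref{lm-3} (which is exactly where convexity enters in the paper) therefore gives $\beta(z)/\alpha(z) \le \beta_0(x)/u_N^{(N)}$, the \emph{reverse} of the pointwise bound you aim for. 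This surfaces concretely in your contradiction step: under the assumption $\beta(z)/\alpha(z) < \beta_0(x)/u_N^{(N)}$ you produce $y^{(k)} \in \Gamma$ with $y^{(k)} < x^{(k)}$ and $x^{(k)} \in S$, which is no contradiction at all --- $\Gamma$ is full of points componentwise below points of $S$ (the origin, for one). Replacing $y^{(k)}$ by its radial projection onto $S$ does not help, since that projection multiplies \emph{all} coordinates by a factor $\ge 1$, including the $N$-th, and destroys the ordering; the only usable consequence of Theorem~\ref{thm0}(ii) in this chord construction is obtained when a point of $\Gamma$ sits componentwise \emph{above} a point of $S$, i.e.\ when $\beta(z)/\alpha(z)$ exceeds $\beta_0(x)/u_N^{(N)}$ --- which is how Lemma~\ref{lm-3} is proved, not Lemma~\ref{lm-2}. (The proposal is also cut off mid-sentence, so the ``absorption'' of the tangential term is never actually carried out, though that part can be fixed by choosing the tangent direction of $\tilde{x}^{(k)}$ to be $w(z) - (\alpha(z)/u_N^{(N)})\,w_0(x)$.)

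The lower bound of Lemma~\ref{lm-2} is not a convexity statement at all --- the paper points out explicitly that Lemmas~\ref{lm-1} and~\ref{lm-2} do not use convexity --- but a dynamical one. The paper's proof decomposes $DP^{-2}(x)e_N = b(x)e_N + c(x)r(P^{-2}x) + w(x)$, uses (H3$'$) to see that $DP^{-2}(x)e_N \in \dot{K}_J$ and hence $c(x) > 0$, with a uniform positive lower bound by continuity of the bundle decomposition and compactness of $S_I$; it then applies $DP^{-2}(x)$ to the decomposition~\eqref{decomp_z} of $z$ and invokes Lemma~\ref{lm-1} for the resulting vector of $\mathcal{C}_1(P^{-2}x)$ to get $\beta(z)a(x) \ge \alpha(z)c(x)$, i.e.\ $\beta(z)/\alpha(z) \ge c(x)/a(x)$, with $a(x)$ bounded above. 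If you want to salvage your write-up, the part worth keeping is the observation $\beta_0(x) > 0$ via Proposition~\ref{prop:induction}(ii); but the heart of the lemma must come from the cooperativity/irreducibility hypothesis (H3$'$) transported backwards by the dynamics, not from chords of $\Gamma$.
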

\begin{proof}
For $x \in S_I$ we write
\begin{equation*}
DP^{-2}(x) e_N = b(x) e_N + c(x) r(P^{-2}x) + w(x),
\end{equation*}
where $b(x)$ and $c(x)$ are reals, and $w(x) \in \mathcal{T}_{P^{-2}x}S_I$.

For any $x \in S_{I}$ there is a nonempty $J \subset I$ such that $x \in \dot{S}_J$.  It follows from (H3$'$) that $DP^{-2}(x) e_N \in \dot{K}_J$.   As, by Proposition~\ref{prop:induction}(i) with $k = \card{J}$, $r(P^{-2}x) \in \dot{K}_J$, there holds $c(x) > 0$.  Since the bundle decomposition $S_{I} \times V = \mathcal{R}_I \oplus \mathcal{T}S_I \oplus (S_I \times V_{\{N\}})$ is continuous, the function $c$ is continuous, too.  Hence $c(x)$ is bounded away from zero, uniformly in $x \in S_{I}$.

Take $x \in \dot{S}_I$ and $z \in \mathcal{C}_1(x) \setminus V_{I}$.  It follows from~\eqref{decomp_z} that
\begin{multline*}
DP^{-2}(x) z
\\
= {\alpha(z)} b(x) e_N + ({\alpha(z)} c(x) - {\beta(z)} a(x)) r(P^{-2}x)  + (DP^{-2} w(z) + \tilde{w}),
\end{multline*}
where $a(x)$ denotes the norm of $DP^{-2}(x)|_{\mathcal{R}_x}$, and $\tilde{w} \in \mathcal{T}_{P^{-2}x}S_I$.  Applying Lemma~\ref{lm-1} to $DP^{-2}(x) z/\norm{DP^{-2}(x) z} \in \mathcal{C}_1(P^{-2}x)$ we obtain
\begin{equation*}
\frac{\beta(z)}{\alpha(z)} \ge \frac{c(x)}{a(x)}.
\end{equation*}
Since $a(x)$ is bounded uniformly in $x \in S_I$, the result follows.
\end{proof}

\begin{figure}[H]
  \centering
  \begin{tikzpicture}[scale=1]
    \draw[->] (0,0) -- (4,0) node[right] {$r(x)$};

    \filldraw (0,0) circle (2pt) node[below] {$0$};

    \draw[blue, thick] (0,0) -- (-4,0) ;

    \draw[->] (0, 0) -- (0,4) node[above] {$e_n$};

    \draw[blue, thick] (0,0) -- (-1,4);

    \begin{scope}[on background layer]
        \fill[blue, opacity=.2] (-4,4) -- (-1, 4) -- (0, 0) -- (-4,0);
    \end{scope}

  \end{tikzpicture}
    \caption{\small The picture plane is the orthogonal projection of $V$ on $\spanned\{r(x), e_N\}$.  A geometric interpretation of Lemmas~\ref{lm-1} and~\ref{lm-2} is that the tangent cone $\mathcal{C}(x)$ is contained in the blue-filled domain, and that domain is independent of $x$.}
    \label{fig-2}
\end{figure}
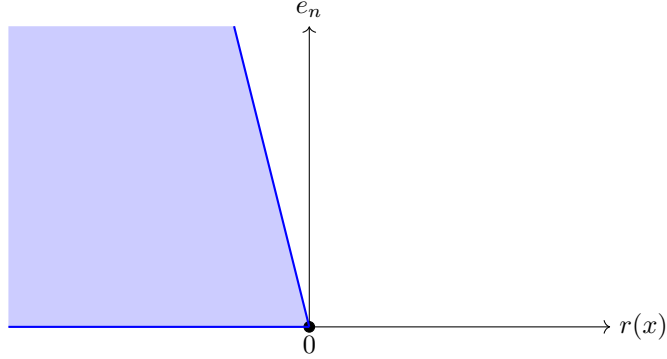

Observe that in Lemmas~\ref{lm-1} and~\ref{lm-2} we do not assume $S$ to be convex.

\begin{lemma}
\label{lm-3}
$\alpha(z)/\beta(z)$ is positive and bounded away from zero, uniformly in $z \in \mathcal{C}_1(x) \setminus V_I$, $x \in \dot{S}_I$.
\end{lemma}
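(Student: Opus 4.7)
The plan is to argue by contradiction, using convexity of $\Gamma$ to produce tangent directions in $\mathcal{C}_1(x_n)$ that eventually contradict Lemma~\ref{lm-2}. Suppose the conclusion fails: there exist sequences $x_n \in \dot{S}_I$ and $z_n \in \mathcal{C}_1(x_n) \setminus V_I$ with $\alpha(z_n)/\beta(z_n) \to 0$. Combining Lemma~\ref{lm-2} with $\norm{z_n}=1$ forces $\alpha(z_n) \to 0$ while $\beta(z_n)$ stays bounded away from zero. Passing to a subsequence, $x_n \to x^* \in S_I$ and $z_n \to z^* \in V_I$ with $\norm{z^*}=1$, $z^* \in \mathcal{C}_1(x^*)$, $\beta(z^*) > 0$.

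For each $n$, pick a sequence $x_n^{(k)} \in S$ with $(x_n^{(k)} - x_n)/\norm{x_n^{(k)} - x_n} \to z_n$ as $k \to \infty$; since $\alpha(z_n) > 0$, we have $(x_n^{(k)})_N > 0$ eventually, so $x_n^{(k)} \in S^{\circ}$. Convexity of $\Gamma$ gives $m_n^{(k)} := (x_n + x_n^{(k)})/2 \in \Gamma$, and radially scaling to $S$ produces $y_n^{(k)} := t_n^{(k)} m_n^{(k)} \in S$ with $t_n^{(k)} \geq 1$ and $t_n^{(k)} \to 1$ as $k \to \infty$. A subsequential limit of the directions $(y_n^{(k)} - x_n)/\norm{y_n^{(k)} - x_n}$ yields a new tangent vector $z'_n \in \mathcal{C}_1(x_n)$. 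Using $y_n^{(k)} - x_n = (t_n^{(k)} - 1) m_n^{(k)} + (x_n^{(k)} - x_n)/2$, one sees that $z'_n$ is proportional to $\mu_n x_n + z_n/2$, where $\mu_n := \lim_{k \to \infty}(t_n^{(k)} - 1)/\norm{x_n^{(k)} - x_n} \ge 0$.

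Writing $x_n = c_r(x_n) r(x_n) + t(x_n)$ in the decomposition $V_I = \spanned\{r(x_n)\} \oplus \mathcal{T}_{x_n} S_I$, an argument parallel to the proof of Lemma~\ref{lm-1} (using Proposition~\ref{prop:induction}(ii) together with $x_n \in \dot{K}_I$) yields $c_r(x_n) > 0$, uniformly in $n$. The decomposition of $z'_n$ then reads $\alpha(z'_n) \propto \alpha(z_n)/2$ and $\beta(z'_n) \propto \beta(z_n)/2 - \mu_n c_r(x_n)$. Applying Lemma~\ref{lm-1} to $z'_n \in \mathcal{C}_1(x_n)$ yields $\beta(z'_n) \geq 0$, hence the upper bound $\mu_n \leq \beta(z_n)/(2 c_r(x_n))$.

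The heart of the argument — and the main obstacle — is the matching quantitative \emph{lower} bound on $\mu_n$. Qualitatively, convexity gives only $\mu_n \geq 0$; the required refinement is that, when $x_n^{(k)} - x_n$ is nearly tangent to the face $C_I$ (i.e.\ $\alpha(z_n)$ small relative to $\beta(z_n)$), convexity of $\Gamma$ forces the radial shift $t_n^{(k)} - 1$ to be of linear order in $\norm{x_n^{(k)} - x_n}$ rather than the quadratic order that would occur if $S$ were smooth at $x^*$. Given such a lower bound on $\mu_n$, iterating the convex-combination / radial-scaling construction produces a sequence of directions in $\mathcal{C}_1(x_n)$ whose $\beta$-coefficient decreases by a definite factor at each step while $\alpha$ is comparably controlled, eventually yielding a direction that violates Lemma~\ref{lm-2} and closes the contradiction. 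This fine-scale convexity analysis at boundary points of $\Gamma$ is where the convexity hypothesis enters substantively.
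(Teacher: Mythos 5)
There is a genuine gap, and you have in fact flagged it yourself: the ``matching quantitative lower bound on $\mu_n$'' is not an obstacle to be smoothed over later --- it is the entire content of the lemma, and your framework cannot produce it. Everything you use (midpoints $(x_n + x_n^{(k)})/2$, radial rescaling, the sign information from Lemmas~\ref{lm-1} and~\ref{lm-2}) is \emph{local} convexity information near $x_n$, and local convexity of $\Gamma$ near a boundary point is perfectly consistent with the supporting hyperplane at that point containing the face direction, i.e.\ with $\alpha(z)/\beta(z)$ being $0$ or arbitrarily small. Already in the planar case $N=2$, $I=\{1\}$: a convex region with outer boundary leaving $(u_1,0)$ tangentially to the $x_1$-axis violates the lemma but satisfies every local constraint you invoke. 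What rules this out is a \emph{global} fact: $\Gamma$ contains the segment from the origin to the axial fixed point $u_N=(R|_S)^{-1}(0,\dots,0,1)$, so any supporting hyperplane of $\Gamma$ at $x$ must keep $u_N$ on the correct side, and this forces the hyperplane --- hence the tangent cone --- to be tilted away from $V_I$ by a definite, uniformly controlled amount. Your argument never uses $u_N$ or any point of $S$ far from $x_n$, so the needed lower bound on $\mu_n$ is not merely unproved; it is unobtainable from the hypotheses you actually deploy.

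For comparison, the paper's proof goes directly for this global supporting-hyperplane statement: it takes a small simplex in $\triangle\cap\dot{C}_I$ with barycentre $R(x)$, lifts its vertices together with $(0,\dots,0,1)$ to $S$, and uses convexity of $\Gamma$ to conclude that the lifted convex hull lies on one side of the hyperplane $L$ through those lifted points. Shrinking the simplex, $L$ converges to a hyperplane $\widetilde{L}\supset x+\mathcal{T}_xS_I$ passing through $u_N$, with normal $\tilde{p}(x)\in K^{\circ}$, and $\langle z,\tilde{p}(x)\rangle\ge 0$ for all $z\in\mathcal{C}_1(x)$ then yields
\begin{equation*}
\frac{\alpha(z)}{\beta(z)} \ \ge\ \frac{\langle r(x),\tilde{p}(x)\rangle}{\langle e_N,\tilde{p}(x)\rangle},
\end{equation*}
uniform in $x$ by continuity and compactness. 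If you want to salvage your contradiction scheme, you would have to import exactly this ingredient (convex combinations of $x_n$ with $u_N$, not with nearby points $x_n^{(k)}$), at which point the argument collapses into the paper's direct one.
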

\begin{proof}
Take $x \in \dot{S}_I$.  Because $R(S_I) = \triangle \cap C_I$ and $R(x) \in \triangle \cap \dot{C}_I$, we can find a simplex, $\conv\{y^{(1)}, \dots, y^{(N-1)}\} \subset \triangle \cap \dot{C}_I$, having $R(x)$ as its barycentre.  Consider the hyperplane $L$ passing through $u_N = (R|_{S})^{-1}(0, \dots, 0, 1)$, $x^{(1)} = (R|_{S})^{-1} (y^{(1)})$, \dots, $x^{(N - 1)} = (R|_{S})^{-1} (y^{(N - 1)})$.  A vector $p$ normal to $L$ can be chosen to have all coordinates positive, so the intersection $L \cap C$ divides $C$ into two sets, a bounded one, $L^{-}$, containing the origin, and an unbounded one, $L^{+}$.  By the convexity assumption, the image $(R|_{S})^{-1}(\conv\{{(0, \dots, 0, 1)}, y^{(1)}, \dots, y^{(N - 1)}\})$ is contained in $L \cup L^{+}$.  Observe that the image of $\{\, t_{0}(0, \dots, 0, 1) + t_{1}y^{(1)} + \ldots + {} t_{N - 1}y^{(N - 1)}: t_0 + t_1 + \ldots + {} t_{N - 1} = 1, \ t_0 \ge 0, \ t_1, \dots, t_{N - 1} > 0 \,\}$ under $(R|_{S})^{-1}$ is a neighbourhood of $x$ in the relative topology of $S$.

The above construction can be repeated when we replace $\conv\{y^{(1)}, \dots, \allowbreak y^{(N-1)}\}$ by its image under the homothety with centre $R(x)$ and ratio $\epsilon \in (0, 1]$.  Let $\epsilon \to 0^{+}$.  Then the hyperplanes $L$ converge to the hyperplane $\widetilde{L}$ containing $x + \mathcal{T}_{x} S_{I}$ and passing through $u_N$.  Any $z \in \mathcal{C}_1(x)$, considered a bound vector with initial point at $x$, has its terminal point in $\widetilde{L} \cup \widetilde{L}^{+}$.

A normal vector to $\widetilde{L}$ can be chosen to belong to $K^{\circ}$ (denote such a normalized vector by $\tilde{p}(x)$).  From the previous paragraph it follows that for any $z \in \mathcal{C}_1(x)$ there holds $\langle z, \tilde{p}(x) \rangle \ge 0$, consequently, taking~\eqref{decomp_z} into account we obtain
\begin{equation*}
\alpha(z) \langle e_N, \tilde{p}(x) \rangle - \beta(z) \langle r(x), \tilde{p}(x) \rangle + \gamma(z) \langle w(z), \tilde{p}(x) \rangle \ge 0.
\end{equation*}
As $\alpha(z) > 0$, $\langle e_N, \tilde{p}(x) \rangle > 0$, $\langle r(x), \tilde{p}(x) \rangle > 0$ and $\langle w(z), \tilde{p}(x) \rangle = 0$, we have that
\begin{equation*}
\frac{\alpha(z)}{\beta(z)} \ge \frac{\langle r(x), \tilde{p}(x) \rangle}{\langle e_N, \tilde{p}(x) \rangle}.
\end{equation*}
$r(x)$ depends continuously on $x \in S_I$, and $\tilde{p}$ can be extended to a continuous function on the whole of $S_I$, satisfying the inequalities $\langle r(x), \tilde{p}(x) \rangle > 0$ and $\langle e_N, \tilde{p}(x) \rangle > 0$.  The conclusion of the lemma thus follows.
\end{proof}

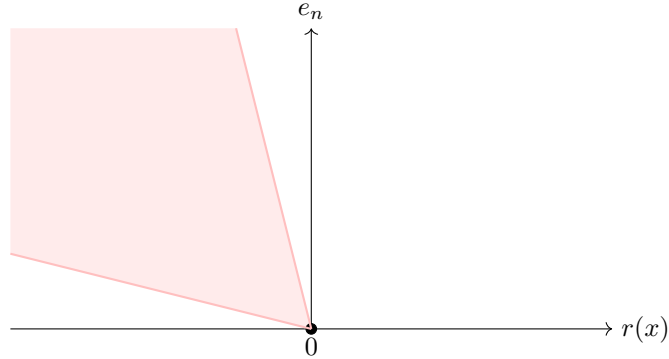
\begin{figure}[H]
  \centering
  \begin{tikzpicture}[scale=1]
    \draw[->] (0,0) -- (4,0) node[right] {$r(x)$};

    \draw (0,0) -- (-4,0) ;
    \draw[->] (0, 0) -- (0,4) node[above] {$e_n$};
    \filldraw (0,0) circle (2pt) node[below] {$0$};
    \draw[pink, thick] (0,0) -- (-4,1);
    \draw[pink, thick] (0,0) -- (-1,4);
    \begin{scope}[on background layer]
        \fill[pink, opacity=.3] (-4,4) -- (-1, 4) -- (0, 0) -- (-4,1) -- (-4,0);
    \end{scope}

  \end{tikzpicture}
    \caption{\small As in Figure~\ref{fig-2}, the picture plane is the orthogonal projection of $V$ on $\spanned\{r(x), e_N\}$.  A geometric interpretation of Lemmas~\ref{lm-1}, \ref{lm-2} and~\ref{lm-3} is that $\mathcal{C}(x) \setminus V_I$ is contained in the pink-filled domain, and that domain is independent of $x$.}
    \label{fig-3}
\end{figure}

\bigskip
For $\epsilon > 0$ sufficiently small consider the set
\begin{multline*}
B_{\epsilon} := \{\, (x , v) \in S_I \times V : \dist(x ,\partial S_I) \ge \epsilon \\
\text{ and } v = {\alpha} e_N + w \text{ such that } \norm{v} = 1,
w \in \mathcal{T}_{x}S_I \text{ and } \alpha \ge \epsilon \norm{w} \, \}.
\end{multline*}
It is easy to see that $B_{\epsilon}$ is closed in $S_I \times V$, hence compact.

We can (and do) take $\eta_{\epsilon} > 0$ such that for any $x \in \dot{S}_I$ with $\dist(x ,\partial S_I) \ge \epsilon$ there holds $x - {\eta_{\epsilon}} r(x) \in \dot{C}_I$.  One has then, by Theorem~\ref{thm0}(ii), that $x - {\eta_{\epsilon}} r(x) \in \Gamma$ and $x + {\eta_{\epsilon}} r(x) \in C \setminus \Gamma$ for any $x \in \dot{S}_I$ with $\dist(x ,\partial S_I) \ge \epsilon$.

For $(x, v) \in B_{\epsilon}$ take the set $x + [0, {\delta}_{\epsilon}] v + [{- \eta_{\epsilon}}, {\eta_{\epsilon}}] r(x)$, where $\delta_{\epsilon} > 0$, independent of $(x, v)$, will be set soon.

First, we take $\delta_{\epsilon} > 0$ so small that $x + {\delta_{\epsilon}} v - {\eta_{\epsilon}} r(x) \in C$ and $x + {\delta_{\epsilon}} v + {\eta_{\epsilon}} r(x)\in C$ for all $(x, v) \in B_{\epsilon}$.  By the convexity of $C$, $x + {\delta} v + {\eta} r(x) \in C$ for all $(x, v) \in B_{\epsilon}$,  $\delta \in [0, \delta_{\epsilon}]$ and $\eta \in [- \eta_{\epsilon}, \eta_{\epsilon}]$.

\begin{lemma}
  \label{add-lemma-1}
  $\delta_{\epsilon} > 0$ can be taken so small that $x + {\delta} v - {\eta_{\epsilon}} r(x) \in \Gamma \setminus S$ and $x + {\delta} v + {\eta_{\epsilon}} r(x) \in C \setminus \Gamma$, for all $\delta \in [0, \delta_{\epsilon}]$.
\end{lemma}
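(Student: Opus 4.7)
The plan is to reduce both inclusions to the sign of a single continuous radial height function on $C \setminus \{0\}$. Define $f \colon C \setminus \{0\} \to \RR$ by
\[
f(y) := \norm{(R|_S)^{-1}(R(y))} - \norm{y}.
\]
By Theorem~\ref{thm0}(i), $(R|_S)^{-1}$ is a homeomorphism and $R$ is continuous off the origin, so $f$ is continuous. From Theorem~\ref{thm0}(iv), every $y \in C \setminus \{0\}$ lies on the open ray through $(R|_S)^{-1}(R(y)) \in S$, so we read off $f(y) > 0 \iff y \in \Gamma \setminus S$, $f(y) = 0 \iff y \in S$, and $f(y) < 0 \iff y \in C \setminus \Gamma$. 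The properties of $x \pm \eta_{\epsilon} r(x)$ stated immediately before the lemma then translate into $f(x - \eta_{\epsilon} r(x)) > 0$ and $f(x + \eta_{\epsilon} r(x)) < 0$ for every $x \in \dot{S}_I$ with $\dist(x, \partial S_I) \geq \epsilon$.

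Next, I would promote these strict inequalities to uniform ones. The set $K_{\epsilon} := \{\,x \in S_I : \dist(x, \partial S_I) \geq \epsilon\,\}$ is a closed subset of the compact $S_I$, hence compact, and is contained in $\dot{S}_I$. Since $r$ is continuous on $S_I$ by Proposition~\ref{prop:induction}(i), the maps $x \mapsto f(x \pm \eta_{\epsilon} r(x))$ are continuous on $K_{\epsilon}$, so by compactness there exist constants $c_-, c_+ > 0$ with $f(x - \eta_{\epsilon} r(x)) \geq c_-$ and $f(x + \eta_{\epsilon} r(x)) \leq -c_+$ for every $x \in K_{\epsilon}$.

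To conclude, I would apply uniform continuity of $f$. Fix a compact neighborhood $W \subset C \setminus \{0\}$ of the set $\{\,x \pm \eta_{\epsilon} r(x) : x \in K_{\epsilon}\,\}$; this is possible since both $x - \eta_{\epsilon} r(x) \in \dot{C}_I$ and $x + \eta_{\epsilon} r(x)$ are bounded away from $0$ uniformly in $x \in K_{\epsilon}$. On $W$, $f$ is uniformly continuous. Since $B_{\epsilon}$ is compact and every $v$ with $(x,v) \in B_{\epsilon}$ is a unit vector, one further shrinkage of the $\delta_{\epsilon}$ already fixed in the preceding paragraph suffices to make $x + \delta v \pm \eta_{\epsilon} r(x)$ lie in $W$ and force
\[
\abs{f(x + \delta v \pm \eta_{\epsilon} r(x)) - f(x \pm \eta_{\epsilon} r(x))} < \tfrac{1}{2}\min(c_-, c_+)
\]
for all $(x,v) \in B_{\epsilon}$ and $\delta \in [0, \delta_{\epsilon}]$. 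This preserves the strict signs of $f$ and yields the two claimed memberships. The only delicate point is enforcing uniformity in $(x,v) \in B_{\epsilon}$ rather than pointwise statements, and compactness handles this routinely; I note that convexity of $S$ is not invoked at this step — only the convexity of $C$ already used in the preceding paragraph (to keep the perturbed points inside $C$) is needed.
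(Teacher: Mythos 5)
Your proposal is correct and rests on the same mechanism as the paper's proof --- compactness of $B_{\epsilon}$ together with the relative openness in $C$ of $\Gamma \setminus S$ and of the region strictly above $S$ --- the paper phrasing this as a proof by contradiction with convergent subsequences, while you argue directly via a uniform positive lower bound and uniform continuity of the height function $f$ (which, as a bonus, makes explicit the openness that the paper's proof merely asserts). One small point to tighten: the text preceding the lemma only records $x - \eta_{\epsilon} r(x) \in \Gamma$, so to get the \emph{strict} inequality $f(x - \eta_{\epsilon} r(x)) > 0$ you should add that $x - \eta_{\epsilon} r(x) \ll_{I} x$ with both points in $\dot{C}_I$ rules out $x - \eta_{\epsilon} r(x) \in S$ by Theorem~\ref{thm0}(ii).
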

\begin{proof}
  Suppose to the contrary that there is no such $\delta_{\epsilon}$.  It follows that there are sequences $(x^{(k)})_{k = 1}^{\infty}$, $(v^{(k)})_{k = 1}^{\infty}$, $(x^{(k)}, v^{(k)}) \in B_{\epsilon}$, and $\delta^{(k)} \to 0$ as $k \to \infty$, such that $x^{(k)} + \delta^{(k)} v^{(k)} - {\eta_{\epsilon}} r(x^{(k)}) \notin \Gamma \setminus S$ [$x^{(k)} + \delta^{(k)} v^{(k)} + {\eta_{\epsilon}} r(x^{(k)}) \notin C \setminus \Gamma$].  By compactness, one can extract subsequences (denoted as before) such that $x^{(k)} + \delta^{(k)} v^{(k)}- {\eta_{\epsilon}} r(x^{(k)})$ converge to some $x  - {\eta_{\epsilon}} r(x) \in \dot{C}_I$ [$x^{(k)} + \delta^{(k)} v^{(k)} + {\eta_{\epsilon}} r(x_n)$ converge to some $x + {\eta_{\epsilon}} r(x) \in \dot{C}_I$].  It follows from Theorem~\ref{thm0} that $x - {\eta_{\epsilon}} r(x) \in \Gamma \setminus S$ [$x + {\eta_{\epsilon}} r(x) \in \Inte_{C}(C \setminus \Gamma)$].  But $\Gamma \setminus S$ [$\Inte_{C}(C \setminus \Gamma)$] is open in the relative topology of $C$, a contradiction.
\end{proof}

Fix, for a moment, $(x, v) \in B_{\epsilon}$.  that there is $j_{(x, v)}(\delta) \in [- \eta_{\epsilon}, \eta_{\epsilon}]$ such that
\begin{equation*}
x + {\delta} v - j_{(x, v)}(\delta) \, r(x) \in S.
\end{equation*}
If follows from Theorem~\ref{thm0} that for a fixed $\delta$ such a $j_{(x, v)}(\delta)$ is unique (otherwise, for $\delta = 0$ we would find two points in $\dot{S}_I$ in the $\ll_{I}$ relation, and for $\delta \in (0, \eta]$ we would find two points in $S^{\circ}$ in the $<$ relation).  We have
\begin{equation}
  \label{add-eq-1}
  S \cap (x + [0, {\delta}_{\epsilon}] v + [{- \eta_{\epsilon}}, {\eta_{\epsilon}}] r(x)) = x + \{\, {\delta} v - j_{(x, v)}(\delta) r(x) : \delta \in [0, {\delta}_{\epsilon}] \,\}.
\end{equation}

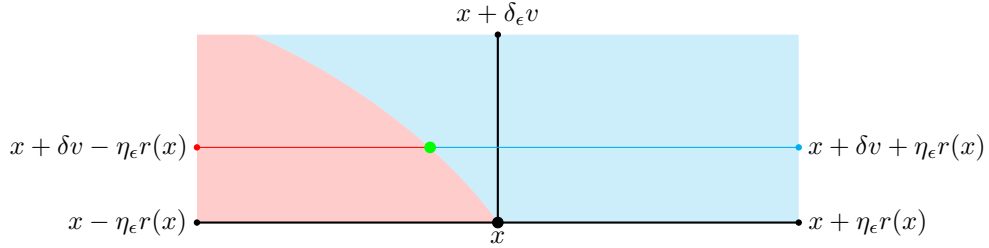
\begin{figure}[H]
  \centering
  \begin{tikzpicture}[scale=1]

    \draw[thick] (0,0) -- (4,0) node[right] {$x + {\eta_{\epsilon}} r(x)$};

    \filldraw (0,0) circle (2pt) node[below] {$x$};

    \draw[thick] (0,0) -- (-4,0) node[left] {$x - {\eta_{\epsilon}} r(x)$};

    \draw[thick] (0, 0) -- (0,2.5) node[above] {$x + {\delta_{\epsilon}}v$};

    \draw[cyan] (-.9, 1)  -- (4, 1);

    \draw[red] (-4, 1) -- (-.9,1);

    \filldraw[green] (-.9,1) circle (2pt);
    \filldraw (4,0) circle (1pt);
    \filldraw (-4,0) circle (1pt);
    \filldraw (0,2.5) circle (1pt);
    \filldraw[red] (-4,1) circle (1pt);
    \filldraw[cyan] (4,1) circle (1pt);

    \begin{scope}[on background layer]
        \fill[red, opacity=.2] (0, 0) .. controls (-0.125/8, 0.125/4) and (-1,1.5) .. (-3.25,2.5) -- (-3.25,2.5) .. controls (-4,2.5) .. (-4,2.5) -- (-4,2.5) .. controls (-4,0) .. (-4,0);
        \fill[cyan, opacity=.2] (0, 0) .. controls (-0.125/8, 0.125/4) and (-1,1.5) .. (-3.25,2.5) -- (-3.25,2.5) .. controls (4,2.5) .. (4,2.5) -- (4,4) .. controls (4,0) .. (4,0) -- (4,0) .. controls (0,0) .. (0,0);
    \end{scope}
    \node[left] at (-4, 1) {$x + {\delta}v - {\eta_{\epsilon}} r(x)$};
    \node[right] at (4, 1) {$x + {\delta}v + {\eta_{\epsilon}} r(x)$};

  \end{tikzpicture}
    \caption{\small The picture plane is $x + \spanned\{v, r(x)\}$. The cyan-filled domain is $(C \setminus \Gamma) \cap (x + \spanned\{v, r(x)\})$, the red-filled domain is $(\Gamma \setminus S) \cap (x + \spanned\{v, r(x)\})$.  The green-filled circle represents the intersection of $S$ with $x + {\delta} v + [- {\eta_{\epsilon}}, {\eta_{\epsilon}}] r(x)$, for some $\delta \in (0, \delta_{\epsilon})$.}
    \label{fig-4}
\end{figure}

\begin{lemma}
  \label{add-lemma-2}
  The assignment
  \begin{equation*}
    B_{\epsilon} \times [0, \delta_{\epsilon}] \ni (x, v, \delta) \mapsto j_{(x, v)}(\delta) \in [- \eta_{\epsilon}, \eta_{\epsilon}]
  \end{equation*}
  is continuous.
\end{lemma}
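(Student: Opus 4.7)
The plan is a standard subsequence-limit argument exploiting (a) compactness of $B_{\epsilon} \times [0, \delta_{\epsilon}]$ and of the target interval $[-\eta_{\epsilon}, \eta_{\epsilon}]$, (b) continuity of $r$ on $S_{I}$ (Proposition~\ref{prop:induction}(i)), (c) closedness of $S$, and (d) the uniqueness of $j_{(x,v)}(\delta)$ established in the paragraph immediately before the lemma.

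Concretely, I would take an arbitrary convergent sequence $(x^{(k)}, v^{(k)}, \delta^{(k)}) \to (x, v, \delta)$ in $B_{\epsilon} \times [0, \delta_{\epsilon}]$ and set $j^{(k)} := j_{(x^{(k)}, v^{(k)})}(\delta^{(k)})$. Since the sequence $(j^{(k)})$ lies in the compact interval $[-\eta_{\epsilon}, \eta_{\epsilon}]$, every subsequence has a further subsequence $(j^{(k_m)})$ converging to some $j^{\ast} \in [-\eta_{\epsilon}, \eta_{\epsilon}]$. By definition,
\begin{equation*}
x^{(k_m)} + \delta^{(k_m)} v^{(k_m)} - j^{(k_m)} r(x^{(k_m)}) \in S.
\end{equation*}
Passing to the limit and using that $r$ is continuous on $S_{I}$ and that $S$ is closed in $C$, we get
\begin{equation*}
x + \delta v - j^{\ast} r(x) \in S.
\end{equation*}

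Now I invoke uniqueness: the point $x + \delta v - j_{(x,v)}(\delta) r(x)$ is the \emph{only} point of $S$ in the segment $x + \delta v + [-\eta_{\epsilon}, \eta_{\epsilon}] r(x)$, by the argument already given in the text (two distinct values of $j$ would force two points of $S$ related by $\ll_{I}$ when $\delta = 0$, respectively by $<$ when $\delta > 0$, contradicting Theorem~\ref{thm0}(ii)). Hence $j^{\ast} = j_{(x,v)}(\delta)$. Since every subsequential limit of $(j^{(k)})$ equals $j_{(x,v)}(\delta)$ and the sequence lies in a compact set, the whole sequence converges to $j_{(x,v)}(\delta)$, proving continuity.

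No step looks delicate: the only place to be careful is in recording that the unique-crossing property holds on the \emph{entire} segment (both the $\delta = 0$ and $\delta > 0$ cases of uniqueness) so that the subsequential limit is pinned down. There is no need to invoke convexity of $S$ here; continuity of $r$, closedness of $S$, and uniqueness of the intersection do all the work.
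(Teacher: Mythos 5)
Your argument is correct and is essentially the paper's own proof, which also extracts a convergent subsequence of the values $j_{(x^{(k)},v^{(k)})}(\delta^{(k)})$, passes to the limit using closedness of $S$ and continuity of $r$, and concludes from the uniqueness of the intersection point. The only difference is presentational: the paper phrases it as a proof by contradiction, while you use the equivalent ``every subsequential limit coincides'' formulation.
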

\begin{proof}
  Suppose not.  This means that there is a sequence $(x^{(k)}, v^{(k)}, \delta^{(k)})_{k=1}^{\infty}$ converging to $(x, v, \delta)$ such that $j_{(x^{(k)}, v^{(k)})}(\delta^{(k)})$ does not converge to $j_{(x, v)}(\delta)$.  By compactness, we can extract a subsequence (denoted as above) such that $j_{(x^{(k)}, v^{(k)})}(\delta^{(k)})$ converge to $\tilde{j} \ne j_{(x, v)}(\delta)$.  But $x^{(k)} + \delta^{(k)} v^{(k)} - j_{(x^{(k)}, v^{(k)})}(\delta) \allowbreak r(x^{(k)})$ belong to the closed set $S$, so their limit, $x + {\delta} v- \tilde{j} \, r(x)$ belongs to $S$, too.
\end{proof}

\begin{lemma}
  \label{add-lemma-3}
  For $(x, v) \in B_{\epsilon}$ fixed, the function $j_{(x, v)} \colon  [0, \delta_{\epsilon}] \to \RR$ is convex.
\end{lemma}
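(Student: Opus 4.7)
The plan is to derive convexity of $j_{(x,v)}$ directly from the convexity of the global attractor $\Gamma$ by slicing $\Gamma$ with the $2$-dimensional plane through $x$ spanned by $v$ and $r(x)$.

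First I would introduce affine coordinates on $P := x + \spanned\{v, r(x)\}$ via the isomorphism $\phi(\delta, \eta) := x + {\delta}\, v + {\eta}\, r(x)$, so that the slab singled out in Lemma~\ref{add-lemma-1} becomes the rectangle $T := [0, \delta_\epsilon] \times [-\eta_\epsilon, \eta_\epsilon]$. By that lemma the bottom edge of $T$ maps into $\Gamma \setminus S$ and its top edge maps into $C \setminus \Gamma$, while equation~\eqref{add-eq-1} asserts that on each vertical fibre $\{\delta\} \times [-\eta_\epsilon, \eta_\epsilon]$ the plane meets $S$ exactly at height $\eta = -j_{(x,v)}(\delta)$. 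Applying the intermediate value theorem fibre by fibre, together with closedness of $\Gamma$ and the uniqueness statement in Theorem~\ref{thm0}(ii), yields the identification
\[\phi^{-1}(\Gamma) \cap T \;=\; \{\, (\delta, \eta) \in T : \eta \le -j_{(x,v)}(\delta) \,\}.\]

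Since $\Gamma$, $P$ and $T$ are all convex, the set on the left-hand side is convex in $P$. But a subset of $[0, \delta_\epsilon] \times \RR$ of the form $\{\eta \le f(\delta)\}$ is convex if and only if $f$ is concave, so $-j_{(x,v)}$ is concave and $j_{(x,v)}$ is convex. The main obstacle I foresee is precisely the identification displayed above: I have to verify that the piece of $\partial \Gamma$ visible inside the strip is exhausted by the graph of $-j_{(x,v)}$, with no contribution from the $\partial C$-boundary of $\Gamma$ and no multiple crossings along a single fibre. The hypothesis $(x,v) \in B_\epsilon$, i.e.\ $\dist(x, \partial S_I) \ge \epsilon$, together with $\delta_\epsilon, \eta_\epsilon$ small, keeps the strip inside $\dot{C}_I$ for $\delta = 0$ and inside $\dot{C}$ for $\delta > 0$, so only points of $S$ can lie on $\partial \Gamma$ there; and a hypothetical second $S$-point on the same fibre would be $\ll$-related to the first (or $<_I$-related when $\delta = 0$), contradicting Theorem~\ref{thm0}(ii). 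Once this identification is in place, the convex-set-to-concave-function step is standard.
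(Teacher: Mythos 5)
Your proposal is correct and follows essentially the same route as the paper: intersect the convex set $\Gamma$ with the convex slab $x + [0,\delta_\epsilon]v + [-\eta_\epsilon,\eta_\epsilon]r(x)$, identify that intersection (via Lemma~\ref{add-lemma-1} and~\eqref{add-eq-1}) with the epigraph of $j_{(x,v)}$ in the $(\delta,\eta)$-coordinates, and invoke the equivalence between convexity of a function and convexity of its epigraph. The paper states the identification in one line, whereas you spell out the fibre-by-fibre verification using Theorem~\ref{thm0}(ii); that extra care is harmless and fills in exactly the detail the paper leaves implicit.
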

\begin{proof}
  The convex set
  \begin{equation*}
    \Gamma \cap (x + [0, {\delta}_{\epsilon}] v + [{- \eta_{\epsilon}}, {\eta_{\epsilon}}] r(x)) = x + \{\, {\delta} v - [j_{(x, v)}(\delta), {\eta_{\epsilon}}] r(x) : \delta \in [0, {\delta}_{\epsilon}] \,\}
  \end{equation*}
  corresponds to the epigraph of $j_{(x, v)}$.
\end{proof}

\medskip
Fix an ergodic invariant measure $\mu$ such that $\mu(\dot{S}_I) = 1$.  Denote by $\lambdaprinc(\mu)$ the principal Lyapunov exponent for $\mu$, and by $\lambdaext(\mu)$ the (unique) external Lyapunov exponent for $\mu$.  Let $O$ stand for the set of those $x \in \dot{S}_I$ for which the Oseledets splitting is defined.  $O$ is invariant, with $\mu(O) = 1$.

It follows from Proposition~\ref{prop:induction}(iii) that the smallest Lyapunov exponent on $S_{I} \times V_{I}$, that is, $\lambdaprinc(\mu)$, has multiplicity (in $S_{I} \times V_I$) one, and that the remaining Lyapunov exponents on $S_{I} \times V_I$, that is, those corresponding to $\mathcal{T}S_{I}$, are larger than $\lambdaprinc(\mu)$.  Recall that our purpose is to prove that $\lambdaext(\mu) > \lambdaprinc(\mu)$.  Suppose to the contrary that $\lambdaext(\mu) \le \lambdaprinc(\mu)$.

\smallskip
For any $x \in O$ we define a two-dimensional subspace $\mathcal{E}(x)$.
\begin{itemize}
  \item
  If $\lambdaext(\mu) < \lambdaprinc(\mu)$, let $\mathcal{E}(x)$ stand for the sum of the one-dimensional Oseledets subspace corresponding to the smallest Lyapunov exponent (that is, to $\lambdaext(\mu)$) and the one-dimensional Oseledets subspace corresponding to the second smallest Lyapunov exponent (that is, to $\lambdaprinc(\mu)$).
  \item
  If $\lambdaext(\mu) = \lambdaprinc(\mu)$, let $\mathcal{E}(x)$ stand for the two-dimensional Oseledets subspace corresponding to the smallest Lyapunov exponent.
\end{itemize}

By~\eqref{new-eq-1}, for each $x \in O$ there holds $V = \mathcal{E}(x) \oplus \mathcal{T}_{x} S_{I}$.  Furthermore, $\mathcal{R}_I(x) \subset \mathcal{E}(x)$ for each $x \in O$.

\smallskip
By \cite[Cor.~7.3]{Lian-Lu}, we can find a measurable family $\{\tilde{e}(x)\}_{x \in O}$ of unit vectors such that $(\tilde{e}(x), r(x))$ is a basis of $\mathcal{E}(x)$.  The vectors $\tilde{e}(x)$ can be chosen so that their $N$-th coordinates are positive.

For $k \in \NN$ denote by $O_k$ the set of all those $x \in O$ for which $(x, \tilde{e}(x)) \in B_{1/k}$.  The set $O_k$ is measurable.

Let a positive integer $k$ be fixed. For any $x \in O_k$ we define
\begin{equation*}
j_x(\delta) := j_{(x, \tilde{e}(x))}(\delta), \quad \delta \in [0, \delta_{1/k}].
\end{equation*}
It follows from Lemma~\ref{add-lemma-3} that $j'_{x}(0)$, the right derivative at $0$, exists.  Consequently, by~\eqref{add-eq-1}, $\mathcal{C}_1(x) \cap \mathcal{E}(x) = \{z(x)\}$, where
\begin{equation*}
  z(x) := \frac{\tilde{e}(x) - j'_{x}(0) r(x)}{\norm{\tilde{e}(x) - j'_{x}(0) r(x)}}.
\end{equation*}

\begin{figure}[H]
  \centering
  \begin{tikzpicture}[scale=1]
    \pgfmathsetmacro{\vectorlength}{veclen(-1,2)};

    \draw[thick] (0,0) -- (4,0) node[right] {$x + \eta_{1/k} r(x)$};

    \filldraw (0,0) circle (2pt) node[below] {$x$};

    \draw[thick] (0,0) -- (-4,0) node[left] {$x - \eta_{1/k} r(x)$};

    \draw[thick] (0, 0) -- (0,4) node[above] {$x + \delta_{1/k} \tilde{e}(x)$};

    \draw[green, thick] (0, 0) .. controls (-0.125/8, 0.125/4) and (-1.5,3.5) .. (-3.25,4);

    \draw[->,blue, thick] (0,0) -- (-4/\vectorlength, 8/\vectorlength) node[right=2pt] {$z(x)$};

    \begin{scope}[on background layer]
        \fill[red, opacity=.2] (-4,0) .. controls (-4,4) .. (-4,4) -- (-4, 4) .. controls (-3.25,4) .. (-3.25,4) -- (0, 0) .. controls (-0.125/8, 0.125/4) and (-1.5,3.5) .. (-3.25,4) -- (0,0) .. controls (-4,0) .. (-4,0);
        \fill[cyan, opacity=.2] (0, 0) .. controls (-0.125/8, 0.125/4) and (-1.5,3.5) .. (-3.25,4) -- (-3.25,4) .. controls (4,4) .. (4,4) -- (4,4) .. controls (4,0) .. (4,0) -- (4,0) .. controls (0,0) .. (0,0);
    \end{scope}
    \filldraw (-4,0) circle (1pt);
    \filldraw (4,0) circle (1pt);
    \filldraw (0,4) circle (1pt);

  \end{tikzpicture}
    \caption{\small The picture plane is $x + \mathcal{E}(x)$.  The cyan-filled domain is $(C \setminus \Gamma) \cap (x + \mathcal{E}(x))$, the red-filled domain is $(\Gamma \setminus S) \cap (x + \mathcal{E}(x))$, the green curve is $S \cap (x + \mathcal{E}(x))$. }
    \label{fig-5}
\end{figure}
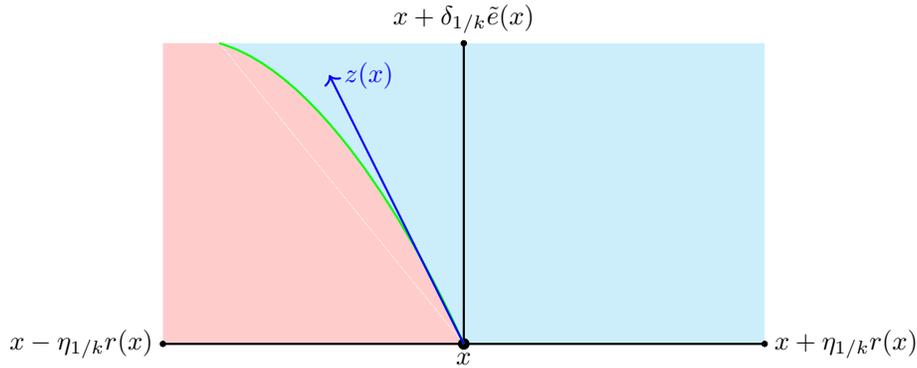

\begin{proposition}
  \label{add-prop1}
  \begin{enumerate}
    \item[\textup{(1)}]
        $j'_x(0)$ is positive, for all $x \in O_k$.
    \item[\textup{(2)}]
        The assignment
            \begin{equation*}
                O_k \ni x \mapsto j'_{x}(0) \in (0, \infty)
            \end{equation*}
       is $(\mathcal{B}(\dot{S}_{I}), \mathcal{B}(\RR))$-measurable.
  \end{enumerate}
\end{proposition}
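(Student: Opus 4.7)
The plan is to deduce (1) from the quantitative cone estimate in Lemma~\ref{lm-2}, and to deduce (2) from the joint continuity supplied by Lemma~\ref{add-lemma-2} together with the convexity of $j_x$ supplied by Lemma~\ref{add-lemma-3}.

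For (1), I would first note that $j_x(0)=0$, since $x\in S$ and the unique such value is zero. The unit vector $z(x)\in\mathcal{C}_1(x)$ is by construction a positive multiple of $\tilde e(x)-j'_x(0)\,r(x)$, so to read off its decomposition~\eqref{decomp_z} I would write $\tilde e(x)=\alpha_x e_N+w_x$ with $w_x\in\mathcal{T}_x S_I$ and $\alpha_x=(\tilde e(x))_N>0$. The membership $(x,\tilde e(x))\in B_{1/k}$ together with $\norm{\tilde e(x)}=1$ forces $\alpha_x\ge 1/\sqrt{1+k^2}$. Consequently, in~\eqref{decomp_z} one has $\alpha(z(x))$ equal to a positive multiple of $\alpha_x$ and $\beta(z(x))$ equal to the same multiple of $j'_x(0)$, while $w(z(x))$ is proportional to $w_x$. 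Since $(\tilde e(x))_N>0$ and $r(x)\in V_I$, we have $z(x)\notin V_I$, so Lemma~\ref{lm-2} supplies a universal constant $c>0$ with $\beta(z(x))/\alpha(z(x))\ge c$; i.e.\ $j'_x(0)\ge c\,\alpha_x>0$.

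For (2), Lemma~\ref{add-lemma-2} gives joint continuity of $(x,v,\delta)\mapsto j_{(x,v)}(\delta)$ on $B_{1/k}\times[0,\delta_{1/k}]$. Composing with the measurable selection $x\mapsto\tilde e(x)$ shows that $x\mapsto j_x(\delta)$ is measurable for every fixed $\delta$. Lemma~\ref{add-lemma-3} together with $j_x(0)=0$ implies that $\delta\mapsto j_x(\delta)/\delta$ is non-decreasing on $(0,\delta_{1/k}]$, so
\begin{equation*}
j'_x(0)=\inf_{\substack{n\in\NN \\ 1/n\le\delta_{1/k}}} n\,j_x(1/n),
\end{equation*}
which is an infimum of countably many measurable functions, hence measurable. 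The lower bound established in (1) guarantees that the infimum takes values in $(0,\infty)$.

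The one delicate point is the verification $z(x)\notin V_I$ in (1): without it, Lemma~\ref{lm-1} would yield only $j'_x(0)\ge 0$, whereas strict positivity is what lets us promote $z(x)$ to a genuine direction transverse to $V_I$. That step is secured by the deliberate choice of $\tilde e(x)$ with positive $N$-th coordinate, so the obstacle is conceptual rather than technical; once past it, both parts follow by combining the earlier geometric lemmas with the structure of convex, non-negative, vanishing-at-zero functions on an interval.
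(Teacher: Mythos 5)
Your proof is correct and follows essentially the same route as the paper: part (1) is exactly the intended application of Lemma~\ref{lm-2} (after noting $\alpha_x\ge 1/\sqrt{1+k^2}$ and $z(x)\notin V_I$), and part (2) uses the same ingredients (Lemma~\ref{add-lemma-2} plus convexity) that the paper compresses into a citation of Aliprantis--Border. Your countable-infimum formula $j'_x(0)=\inf_n n\,j_x(1/n)$ merely makes explicit the step the paper leaves implicit, so there is nothing to add.
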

\begin{proof}
(1) follows by Lemma~\ref{lm-2}.

(2) is a consequence of the fact that, by~\cite[Thm.~4.55]{AliB}, the assignment
\begin{equation*}
O_k \ni x \mapsto [\,  \delta \to j_x(\delta)] \in C([0,\delta_{1/k}], \RR)
\end{equation*}
is $(\mathcal{B}(\dot{S}_{I}), \mathcal{B}(C([0,\delta_{1/k}], \RR))$-measurable.
\end{proof}

We have thus obtained a measurable family $\{z(x)\}_{x \in O}$ of vectors in $\mathcal{C}_1(x) \cap \mathcal{E}(x)$ such that $(z(x), r(x))$ forms a basis of $\mathcal{E}(x)$.  Observe that $DP(x) z(x) \in \mathcal{C}_1(Px) \cap \mathcal{E}(Px)$ and its $N$-th coordinate is positive, so it is equal to $z(Px)$ multiplied by a positive scalar.

\subsection{Ergodic-theoretic considerations}
As in the proof of Lemma~\ref{lm-2}, let $DP^{-2}(x) r(x) = a(x) r(P^{-2}x)$.  Further, for any $x \in O$ define $d(x) > 0$ by $DP^{-2}(x) z(x) = d(x) z(P^{-2}x)$.  $w$ will stand for a generic tangent vector at some point in $S_I$.

Since $a(x) \cdot \ldots \cdot a(P^{-n+1}x) = \norm{DP^{-2n + 2}(x) r(x)}$, one has
\begin{equation}
\label{ineq-1}
\lim\limits_{n \to \infty} \frac{1}{n} \sum\limits_{i = 0}^{n - 1} \ln{a(P^{-2i}x)} = - 2 \lambdaprinc(\mu).
\end{equation}
We claim that
\begin{equation}
\label{ineq-2}
\lim\limits_{n \to \infty} \frac{1}{n} \sum\limits_{i = 0}^{n - 1} \ln{d(P^{-2i}x)} = - 2 \lambdaext(\mu).
\end{equation}
Recall that we suppose that $\lambdaext(\mu) \le \lambdaprinc(\mu)$.  If $\lambdaext(\mu) = \lambdaprinc(\mu)$, the claim is satisfied.  On the other hand, if $\lambdaext(\mu) < \lambdaprinc(\mu)$, then for any $x \in O$ and any $u \in \mathcal{E}(x) \setminus \mathcal{R}_I(x)$ there holds
\begin{equation*}
\lim\limits_{n \to \infty} \frac{1}{n} \sum\limits_{i = 0}^{n - 1} \ln{\norm{DP^{-i}(x)u}} = - \lambdaext(\mu).
\end{equation*}
Consequently, remembering that $d(x) \cdot \ldots \cdot d(P^{-n+1}x) = \norm{DP^{-2n + 2}(x) z(x)}$ we see that \eqref{ineq-2} holds.

We have thus obtained
\begin{equation}
\label{ineq-3}
\lim\limits_{n \to \infty} \frac{1}{2n} \sum\limits_{i = 0}^{n - 1} \frac{\ln{a(P^{-2i}x)}}{\ln{d(P^{-2i}x)}} = \lim\limits_{n \to \infty} \frac{1}{2n} \sum\limits_{i = 0}^{n - 1} \frac{\ln{a(P^{-2i-1}x)}}{\ln{d(P^{-2i-1}x)}} \le 0
\end{equation}
for all $x \in O$.

\smallskip
It follows from~\eqref{decomp_z} that
\begin{equation*}
DP^{-2}(x) z(x) = \alpha(z(x)) DP^{-2}(x) e_N - \beta(z(x)) DP^{-2}(x) r(x) + w.
\end{equation*}
Further, we can write
\begin{equation*}
DP^{-2}(x) e_N = b(x) e_N + c(x) r(P^{-2}x) + w.
\end{equation*}
Consequently,
\begin{multline*}
DP^{-2}(x) z(x)
\\
= \alpha(z(x)) b(x) e_N + \alpha(z(x)) c(x) r(P^{-2}x) - \beta(z(x)) a(x) r(P^{-2}x) + w.
\end{multline*}
On the other hand, by~\eqref{decomp_z},
\begin{equation*}
z(P^{-2}x) = \alpha(z(P^{-2}x)) e_N - \beta(z(P^{-2}x)) r(P^{-2}x) + w.
\end{equation*}
Therefore
\begin{multline*}
\alpha(z(x)) b(x) e_N + \alpha(z(x)) c(x) r(P^{-2}x) - \beta(z(x)) a(x) r(P^{-2}x) + w
\\
= \alpha(z(P^{-2}x)) d(x) e_N - \beta(z(P^{-2}x)) d(x) r(P^{-2}x) + w.
\end{multline*}
Since $e_N$ and $r(P^{-2}x)$ are linearly independent, one gets
\begin{equation*}
\beta(z(x)) a(x) = \alpha(z(x)) c(x)+ \beta(z(P^{-2}x)) d(x),
\end{equation*}
therefore
\begin{equation*}
\frac{a(x)}{d(x)} = \frac{\alpha(z(x)) c(x)}{\beta(z(x)) d(x)} + \frac{\beta(z(P^{-2}x))}{\beta(z(x))}.
\end{equation*}
By Lemma~\ref{lm-3}, the boundedness of $d(\cdot)$ and the boundedness away from zero of $c(\cdot)$, the first term on the right-hand side of the above equation is not less than some $D > 0$.  So we have
\begin{equation*}
\ln{\frac{a(x)}{d(x)}} \ge \ln\left(D + \frac{\beta(z(P^{-2}x))}{\beta(z(x))}\right) = : g(x), \quad x \in O.
\end{equation*}
Pick $M > 0$ such that $\mu(U) > 0$, where $U : = \{\, x \in O : \beta(z(P^{-2}x))/\beta(z(x)) < M \,\}$.  There holds
\begin{equation*}
\frac{D + \tfrac{\beta(z(P^{-2}x))}{\beta(z(x))}} {\tfrac{\beta(z(P^{-2}x))}{\beta(z(x))}} \ge 1 + \frac{D}{M} \quad \text{for } x \in U.
\end{equation*}
Put $\varrho : = \ln{(1 + \tfrac{D}{M})} > 0$.  We have
\begin{equation*}
g(x) \ge
\begin{cases}
\varrho + \ln{\beta(z(P^{-2}x))} - \ln{\beta(z(x))} & \text{for } x \in U
\\
\ln{\beta(z(P^{-2}x))} - \ln{\beta(z(x))}  & \text{for } x \in O \setminus U.
\end{cases}
\end{equation*}

\noindent Hence
\begin{align*}
\sum\limits_{i=0}^{n - 1} \ln{\frac{a(P^{-2i}x)}{d(P^{-2i}x)}} & \ge \sum\limits_{i=0}^{n - 1} g(P^{-2i}x)
\\
& {} \ge
\sum\limits_{i=0}^{n - 1} (\ln{\beta(z(P^{-2(i+1)}x))} - \ln{\beta(z(P^{-2i}x))}) + {\varrho} \sum\limits_{i=0}^{n - 1} \mathbf{1}_{U}(P^{-2i}x)
\\
& {} =
\ln{\beta(z(P^{-2n}x))} - \ln{\beta(z(x))} + {\varrho} \sum\limits_{i=0}^{n - 1} \mathbf{1}_{U}(P^{-2i}x),
\end{align*}
and, similarly
\begin{equation*}
\sum\limits_{i=0}^{n - 1} \ln{\frac{a(P^{-2i-1}x)}{d(P^{-2i-1}x)}} \ge \ln{\beta(z(P^{-2n-1}x))} - \ln{\beta(z(P^{-1}x))} + {\varrho} \sum\limits_{i=0}^{n - 1} \mathbf{1}_{U}(P^{-2i-1}x).
\end{equation*}
An application of the Birkhoff ergodic theorem gives that for $\mu$-a.e.~$x \in O$ there holds
\begin{equation*}
\lim\limits_{n \to \infty} \frac{1}{2n} \sum\limits_{i=0}^{n - 1} \mathbf{1}_{U}(P^{-2i}x) = \lim\limits_{n \to \infty} \frac{1}{2n} \sum\limits_{i=0}^{n - 1} \mathbf{1}_{U}(P^{-2i-1}x) = 2 \mu(U) > 0.
\end{equation*}
If $\beta$ is constant a.e.~on $O$ we have already obtained a contradiction to~\eqref{ineq-3}. So assume that $\beta $ is not constant.  Then there are $0 < r_1 < r_2$ such that the sets $O^{(1)} = \{ x \in O : \beta(z(x)) < r_1 \}$ and $O^{(2)} = \{ x \in O : \beta(z(x)) > r_2 \}$ have both positive measure.

By ergodicity, for $\mu$-a.e. $x \in O$ there is a positive integer $k$ such that $P^{-k} x \in O^{(1)}$ and a sequence $(n_m)_{m = 1}^{\infty}$, $n_m \to \infty$ as $m \to \infty$, such that $P^{-2n_m}x \in O^{(2)}$ for all $m$, or $P^{-2n_m-1}x \in O^{(2)}$ for all $m$.  For such a `good' $x$, write
\begin{multline*}
\ln{\beta(z(P^{-2n_m}x))} - \ln{\beta(z(x))}
\\
= \bigl(\ln{\beta(z(P^{-2n_m}x))} - \ln{\beta(z(P^{-k}x))}\bigr) + \bigl(\ln{\beta(z(P^{-k}x))} - \ln{\beta(z(x))}\bigr),
\end{multline*}
or
\begin{multline*}
\ln{\beta(z(P^{-2n_m-1}x))} - \ln{\beta(z(P^{-1}x))}
\\
= \bigl(\ln{\beta(z(P^{-2n_m-1}x))} - \ln{\beta(z(P^{-k}x))}\bigr) + \bigl(\ln{\beta(z(P^{-k}x))} - \ln{\beta(z(P^{-1}x))}\bigr).
\end{multline*}
The first terms in parentheses on the right-hand side are bounded below by $\ln{r_2} - \ln{r_1} > 0$, consequently
\begin{equation*}
\liminf\limits_{m \to \infty} \frac{\ln{\beta(z(P^{-2n_m}x))} - \ln{\beta(z(P^{-k}x))}}{2n_m} \ge 0,
\end{equation*}
or
\begin{equation*}
\liminf\limits_{m \to \infty} \frac{\ln{\beta(z(P^{-2n_m-1}x))} - \ln{\beta(z(P^{-k}x))}}{2n_m} \ge 0.
\end{equation*}
The second term, divided by $2n_m$, converges to $0$ as $m \to \infty$.

We have thus obtained that for $\mu$-a.e.~$x \in O$ there holds
\begin{equation}
\label{ineq-4}
\limsup\limits_{n \to \infty} \frac{1}{2n} \sum\limits_{i = 0}^{n - 1} \frac{\ln{a(P^{-2n}x)}}{\ln{d(P^{-2n}x)}} > 0 \quad \text{or} \quad \limsup\limits_{n \to \infty} \frac{1}{2n} \sum\limits_{i = 0}^{n - 1} \frac{\ln{a(P^{-2n-1}x)}}{\ln{d(P^{-2n-1}x)}} > 0,
\end{equation}
which contradicts~\eqref{ineq-3}.

\smallskip
The obtained contradiction concludes the proof of the Main Theorem.

\bigskip
\noindent {\em Acknowledgments.}  This work was supported by the NCN grant Maestro \newline 2013/08/A/ST1/00275.  Special thanks go to Antony Quas, to whom the last part of the proof of the Main Theorem is due.

I would like to warmly thank the referees, whose remarks have greatly contributed to improving the presentation.

\end{document}